\newtheorem{thm}{Theorem}[section]
\newtheorem{cor}[thm]{Corollary}
\newtheorem{prop}[thm]{Proposition}
\newtheorem{lem}[thm]{Lemma}
\newtheorem{claim}[thm]{Claim}
\newtheorem{quest}[thm]{Question}
\theoremstyle{definition}
\newtheorem{defn}{Definition}
\setlist[enumerate]{itemsep=2ex, topsep=2ex} %spaces out enumerate/itemize better
\setlist[itemize]{itemsep=2ex, topsep=2ex}
\newcommand{\F}{\mathbb{F}}
\newcommand{\E}{\mathbb{E}}
\newcommand{\al}{\alpha}
\newcommand{\ep}{\varepsilon}
\newcommand{\om}{\omega}
\newcommand{\Om}{\Omega}
\newcommand{\Del}{\Delta}
\renewcommand{\l}{\left}
\renewcommand{\r}{\right}
\newcommand{\half}{\frac{1}{2}}
\newcommand{\sm}{\setminus}
\newcommand{\sub}{\subseteq}
\renewcommand{\c}[1]{\mathcal{#1}}
\renewcommand{\b}[1]{\mathbf{#1}}
\newcommand{\ol}[1]{\overline{#1}}
\newcommand{\tr}[1]{\textrm{#1}}
\newcommand{\rec}[1]{\frac{1}{#1}}
\newcommand{\f}[2]{\frac{#1}{#2}}
\newcommand{\floor}[1]{\l\lfloor #1\r\rfloor}
\newcommand{\mr}[1]{\mathrm{#1}}
\newcommand{\ex}{\mr{ex}}
\title{Random Polynomial Graphs for Random Tur\'an Problems}
\author{Sam Spiro\footnote{Dept.\ of Mathematics, UCSD {\tt sspiro@ucsd.edu}. This material is based upon work supported by the National Science Foundation Graduate Research Fellowship under Grant No. DGE-1650112.}}
\date{\today}
\address
{Department of Mathematics \newline \indent
	University of California, San Diego \newline \indent
	La Jolla, CA, 92093-0112, USA}\fi
\begin{document}
	\maketitle
\begin{abstract}
	Bukh and Conlon used random polynomial graphs to give effective lower bounds on $\mathrm{ex}(n,\mathcal{T}^\ell)$, where $\mathcal{T}^\ell$ is the $\ell$th power of a balanced rooted tree $T$.  We extend their result to give effective lower bounds on $\mathrm{ex}(G_{n,p},\mathcal{T}^\ell)$, which is the maximum number of edges in a $\mathcal{T}^\ell$-free subgraph of the random graph $G_{n,p}$.  Analogous bounds for generalized Tur\'an numbers in random graphs are also proven.
\end{abstract}
\textbf{Keywords:} Tur\'an numbers; Random graphs; Generalized Tur\'an problems; Extremal combinatorics.

\section{Introduction}
Given a family of graphs $\c{F}$, we define the \textit{Tur\'an number} $\ex(n,\c{F})$ to be the maximum number of edges that an $n$-vertex $\c{F}$-free graph can have.  If every $F\in \c{F}$ is not bipartite, then the asymptotic behavior of $\ex(n,\c{F})$ is determined by the Erd\H{o}s-Stone theorem~\cite{erdos1946structure}.  Only sporadic results for $\ex(n,\c{F})$ are known when $\c{F}$ contains a bipartite graph, and in most cases these bounds are not tight.  For example, The K{\H{o}}v{\'a}ri-S\'os-Tur\'an theorem~\cite{kHovari1954problem} implies  $\ex(n,K_{s,t})=O(n^{2-1/s})$, and this bound is only known to be tight when $t$ is sufficiently large in terms of $s$; see for example recent work of Bukh~\cite{bukh2021extremal}.  

The bound $\ex(n,C_{2b})=O(n^{1+1/b})$ was first proven by Bondy and Simonovits~\cite{bondy1974cycles}.  It was shown by Faudree and Simonovits~\cite{faudree1983class} that this same upper bound continues to hold for theta graphs, and it was later shown by Conlon~\cite{conlon2019graphs} that this upper bound is tight for theta graphs which have sufficiently many paths. This lower bound of Conlon used random polynomial graphs, which were first introduced by Bukh~\cite{bukh2015random}.  Towards the rational exponents conjecture, Bukh and Conlon~\cite{bukh2018rational} used random polynomial graphs to give tight bounds on the extremal number of powers of balanced trees; see Theorem~\ref{thm:BukhConlon} below for a precise statement.  A more detailed treatment on Tur\'an numbers can be found in the survey by F\"uredi and Simonovits~\cite{furedi2013history}.

Given a graph $G$ and a family of graphs $\c{F}$, we define the \textit{relative Tur\'an number} $\ex(G,\c{F})$ to be the maximum number of edges that an $\c{F}$-free subgraph of $G$ can have.  There are many results for relative Tur\'an number when $G$ is a hypercube \cite{conlon2010extremal,chung1992subgraphs,thomason2009bounding}, and when $G$ is an arbitrary graph \cite{BriggsCox,FKP,GSTZ,PR,SV-K22,SV-Cycles}.  Another well studied case is when $G$ is a random graph, which will be the main focus of this paper.  To this end, let $G_{n,p}$ be the random $n$-vertex graph obtained by including each possible edge independently and with probability $p$. 

Observe that $\ex(G_{n,1},\c{F})=\ex(K_n,\c{F})=\ex(n,\c{F})$.  Because of this observation, it is natural to ask if the classical bounds on $\ex(n,\c{F})$ mentioned above can be extended to give bounds on $\ex(G_{n,p},\c{F})$ for all $p$.  For example, if every $F\in \c{F}$ is not bipartite, then the behavior of $\ex(G_{n,p},\c{F})$ was essentially determined independently in breakthrough work by Conlon and Gowers~\cite{conlon2016combinatorial} and Schacht~\cite{schacht2016extremal}.  The behavior of $\ex(G_{n,p},C_4)$ was essentially determined by F\"uredi~\cite{F}.  This work was substantially generalized by Morris and Saxton~\cite{morris2016number} who proved effective upper bounds for $\ex(G_{n,p},F)$ when $F$ is either an even cycle or a complete bipartite graph.  Very recently, Jiang and Longbrake~\cite{jiang2022balanced} proved general upper bounds for $\ex(G_{n,p},F)$ whenever $F$ satisfies some mild conditions.  As far as we are aware, these are the only known results concerning the random Tur\'an problem for graphs, though there have been a number of recent results regarding the analogous problem for hypergraphs, see for example \cite{MY,NSV,SV-Girth,SV-K22}.

In the same spirit as the work above, we aim to prove a probabilistic analog of the result of Bukh and Conlon~\cite{bukh2018rational} for powers of trees, and for this we need a few definitions. Given a tree $T$ and a set $R\subsetneq V(T)$, we say that the pair $(T,R)$ is a \textit{rooted tree} and refer to $R$ as its set of \textit{roots}.  Given an integer $\ell\ge 1$ and a rooted tree $(T,R)$, we define $\c{T}_R^\ell$ to be the set of graphs consisting of all possible unions of $\ell$ distinct labeled copies of $T$ such that all of these copies agree on the set of roots $R$.  For example, if $T=K_{1,s}$ and $R$ is its set of leaves, then $\c{T}^\ell=\{K_{\ell,s}\}$.    

Given a tree $T$ and a set of vertices $S\sub V(T)$, we define $e_S(T)$ to be the number edges of $T$ which are incident to a vertex of $S$, and we define the \textit{rooted density}\footnote{This is slightly different than the definition used by \cite{bukh2018rational} but will be more convenient for our purposes.} of $(T,R)$ by \[\rho(T,R)=\min_{S\sub V(T)\sm R}\f{e_S(T)}{|S|}.\]  We will often denote $\c{T}_R^\ell$ and $\rho(T,R)$ simply by $\c{T}^\ell$ and $\rho(T)$ whenever $R$ is understood, and similarly we write $e_S$ instead of $e_S(T)$ whenever $T$ is understood.  One of the main results of Bukh and Conlon~\cite[Lemma 1.2]{bukh2018rational} is (essentially\footnote{Strictly speaking they only state their theorem for ``balanced'' trees, i.e.\ those with $\rho(T)=\frac{e(T)}{|V(T)|-|R|}$.  Theorem~\ref{thm:BukhConlon} can be deduced from an identical proof.  Alternatively, it can be obtained by applying \cite[Lemma 1.2]{bukh2018rational} to the balanced rooted tree $(T,R')$ where $R'=R\cup (V(T)\sm S)$ and $S$ is the set achieving $\rho(T,R)$.}) the following.
\begin{thm}[\cite{bukh2018rational}]\label{thm:BukhConlon}
	Let $(T,R)$ be a rooted tree with $\rho(T)\ge \f{b}{a}$ for $a,b$ positive integers. 
	Then there exists some $\ell_0$ depending only on $(T,R)$ such that, for all $\ell\ge \ell_0$, 
	\[\ex(n,\c{T}^\ell)=\Om\l(n^{2-\f{a}{b}}\r).\]
\end{thm}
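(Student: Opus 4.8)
The plan is to produce the lower bound via Bukh's random polynomial construction \cite{bukh2015random}. Fix positive integers $a,b$ with $\rho(T)\ge b/a$, let $D=D(T,R)$ be a large enough constant, let $q$ range over prime powers, and set $n=q^b$; the passage to arbitrary $n$ is routine (consecutive prime powers are within a bounded factor, so one loses only a constant and then pads with isolated vertices). Let $f=(f_1,\dots,f_a)$ be a uniformly random $a$-tuple of symmetric polynomials $f_i\in\F_q[x_1,\dots,x_b,y_1,\dots,y_b]$ (symmetric meaning $f_i(x,y)=f_i(y,x)$) of degree at most $D$, and let $G=G(f)$ be the graph on $\F_q^b$ in which distinct $u,v$ are adjacent exactly when $f_i(u,v)=0$ for all $i$. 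I will show that with probability $1-o(1)$ this $G$ has $\Theta(q^{2b-a})=\Theta(n^{2-a/b})$ edges and, for a suitable constant $\ell_0=\ell_0(T,R)$, contains no member of $\c{T}^{\ell_0}$. Since a member of $\c{T}^\ell$ contains one of $\c{T}^{\ell_0}$ as a subgraph for every $\ell\ge\ell_0$, being $\c{T}^{\ell_0}$-free implies being $\c{T}^\ell$-free for all such $\ell$, so intersecting the two events yields a graph witnessing $\ex(n,\c{T}^\ell)=\Om(n^{2-a/b})$.

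The edge count is a short first- and second-moment calculation. For distinct $u,v$ each $f_i(u,v)$ is uniform on $\F_q$, so $\P[u\sim v]=q^{-a}$ and $\E\,e(G)=\binom{q^b}{2}q^{-a}=(\tfrac12+o(1))q^{2b-a}$. Taking $D$ large enough that the space of degree-$\le D$ symmetric polynomials realises every assignment of values to any two distinct unordered pairs of points, the vectors $(f_1,\dots,f_a)$ evaluated at two distinct pairs are jointly uniform on $\F_q^{2a}$; hence the edge-indicators are pairwise uncorrelated and $\Var\,e(G)\le\E\,e(G)=O(q^{2b-a})$. By Chebyshev, $e(G)=(\tfrac12+o(1))q^{2b-a}$ with probability $1-o(1)$.

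The crux is $\c{T}^\ell$-freeness, where Bukh's random algebraic method does the work. Root $T$ at $R$ and set $S=V(T)\sm R$. For a fixed labelling $W\colon R\to V(G)$ of the roots, a copy of $T$ in $G$ extending $W$ is precisely a point $(y_v)_{v\in S}\in(\F_q^b)^S$ satisfying, for every edge $uv$ of $T$ incident to $S$, the block of $a$ equations $f_i(\text{image of }u,\text{image of }v)=0$ (edges inside $R$ constrain only $W$, so if $W$ violates one there are no copies). This is a system of $a\,e_S$ equations in an affine space of dimension $b|S|$, and the hypothesis $\rho(T)\ge b/a$ says exactly that $a\,e_{S'}\ge b|S'|$ for \emph{every} $S'\sub S$: the whole system is over-determined, and so is every sub-system obtained by restricting to a subset of the coordinate blocks $\{y_v\}$. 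This is the precise situation governed by Bukh's lemma: because no sub-configuration is under-determined, with probability $1-o(1)$ over $f$ the solution set has no positive-dimensional component, uniformly over all $W$, and hence there is a constant $C=C(T,R,D)$ bounding the number of copies of $T$ extending any given $W$. The force of the lemma is that it disposes of all $q^{O(1)}$ choices of $W$ simultaneously rather than by a hopelessly lossy union bound over configurations. Since a member of $\c{T}^{C+1}$ would supply $C+1$ distinct copies of $T$ over a single $W$, the graph $G$ is $\c{T}^{C+1}$-free, so we may take $\ell_0=C+1$.

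The step I expect to be the main obstacle is exactly this algebraic input: showing that for a random low-degree $f$ no bounded-complexity algebraic configuration in $G(f)$ has larger-than-generic dimension, uniformly over the exponentially many configurations. This is Bukh's random algebraic lemma, so the write-up reduces to invoking it in a suitable form; the only ingredient genuinely special to powers of trees is the elementary accounting that turns ``$\rho(T)\ge b/a$'' into ``every relevant sub-system is over-determined'', which is what makes the lemma applicable and fixes $\ell_0$. The remaining pieces — the edge count, the reduction to $n$ a prime power, and the monotonicity of $\c{T}^\ell$-freeness in $\ell$ — are routine.
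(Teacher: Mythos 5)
Your construction is the same one the paper uses (its Proposition~\ref{prop:randPolyGen}, proved in Section~\ref{sec:randPoly} following Bukh--Conlon and Ma--Yuan--Zhang, specializes to this theorem with $H=K_2$): random symmetric low-degree polynomials over $\F_q^b$, edge count by moments, and a constant bound on the number of copies of $T$ over each root assignment, so that $\ell_0$ can be taken to be that constant plus one. The gap is at the crux, which you delegate to ``Bukh's random algebraic lemma in a suitable form.'' No such packaged lemma exists: the algebraic input (Lemma~\ref{lem:varieties} here, Lemma 2.7 of Bukh--Conlon) is a purely deterministic dichotomy saying that for a bounded-complexity variety, $|X(\F_q)\sm D(\F_q)|$ is either at most a constant $c$ or at least $q/2$; it says nothing probabilistic and nothing uniform over root assignments $W$. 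The probabilistic content has to be supplied by an $s$-th moment estimate for the number of rooted copies of $T$ at a \emph{fixed} $W$, and this is exactly where $\rho(T)\ge b/a$ is used --- but not in the single-copy form you record. The $s$-th moment counts ordered $s$-tuples of (possibly overlapping) copies of $T$ sharing the roots, so one needs $e(K)\ge \rho(T)(v(K)-|R|)$ for every union $K$ of up to $s$ copies agreeing on $R$ (the paper's Lemma~\ref{lem:edgeBound}, proved by induction); your observation that the single-copy system and its sub-systems are over-determined does not by itself yield ``no positive-dimensional component w.h.p.\ uniformly over $W$,'' and over-determinedness of a random system is not a substitute for the moment bound.

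Moreover, the way all $W$ are handled simultaneously is precisely the step you dismiss: there are only $q^{rb}=N^{|R|}$ root tuples (polynomially many, not exponentially many), the per-$W$ failure probability is $O_s(q^{-s})$ by Markov applied to the $s$-th moment plus the dichotomy, and one finishes either by a union bound with $s>rb$ (which then forces the polynomial degree $d$ to be taken of order $s\cdot e(T)$ so that Lemma~\ref{lem:symmetric} applies to unions of $s$ copies) or, as the paper does, by deleting a vertex from each bad root tuple and comparing $\E[B_i]$ with the expected edge count. Calling the union bound ``hopelessly lossy'' and instead asserting a lemma that handles all $W$ at once indicates the mechanism is being assumed rather than proved. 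Two smaller omissions in the same step: degenerate solutions (tuples with repeated coordinates) must be excised as the subvariety $D$ before the dichotomy is applied, since the raw variety can easily be large; and the constant $C$ (hence $\ell_0$) depends on the complexity, hence on the degree $d$, hence on the moment order $s$, so these parameters must be fixed in the right order. As written, the proposal reduces the theorem to a statement that is essentially the theorem itself.
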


Our main theorem is a probabilistic analog of this result.  For this statement, we recall that a sequence of events $A_n$ is said to hold \textit{asymptotically almost surely}, or a.a.s.\ for short, if $\Pr[A_n]$ tends to 1 as $n$ tends towards infinity.
\begin{thm}\label{thm:main}
	Let $(T,R)$ be a rooted tree with $\rho(T)\ge \f{b}{a}$ for $a,b$ positive integers. Then there exists some $\ell_0$ depending only on $(T,R)$ such that, for all $\ell\ge \ell_0$, we have for all $p=p(n)$ with $pn \log n\to \infty$ that a.a.s.
	\[\ex(G_{n,p},\c{T}^\ell)=\Omega\left(p^{1-\f{a}{b}}n^{2-\f{a}{b}}\right).\]
\end{thm}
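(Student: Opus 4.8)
The plan is to combine the Bukh--Conlon random polynomial graph (Theorem~\ref{thm:BukhConlon}) with a random blow-up inside $G_{n,p}$, and then to clean up by the deletion method. By the reduction in the footnote to Theorem~\ref{thm:BukhConlon} I may assume $(T,R)$ is balanced, so $\rho(T)=\f{e(T)}{|V(T)|-|R|}=\f ba$. Write $v_0$ and $e_0$ for the numbers of vertices and edges of a member of $\c T^\ell$, and let $\kap=|R|-e(T[R])$ be the number of components of $T[R]$; a short computation using $e(T)=|V(T)|-1$ gives $e_0-v_0+1=(\ell-1)(\kap-1)$, and in the balanced case $\kap\ge 2$ precisely when $\rho(T)>1$. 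I describe the argument in the principal case $\rho(T)>1$ with $pn\to\infty$; the degenerate regimes ($\rho(T)\le 1$, and $p$ within a fixed constant of $1$) are handled directly---e.g.\ when $p=1-o(1)$ one simply intersects a Bukh--Conlon graph on $[n]$ with $G_{n,p}$ and deletes.

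Fix a prime power $q$ and a dimension $d$ from the Bukh--Conlon construction with $q^d\approx pn$, and let $\Gamma_0$ be the resulting random polynomial graph on a vertex set $V_0$ with $n_0:=|V_0|\approx pn$; by the estimates in the proof of Theorem~\ref{thm:BukhConlon} we may fix one outcome of $\Gamma_0$ with $e(\Gamma_0)=\Om(n_0^{2-a/b})$ and at most $o(n_0^{2-a/b})$ copies of members of $\c T^\ell$. Partition $[n]$ into blocks $(B_v)_{v\in V_0}$ each of size $t:=\floor{n/n_0}\approx 1/p$, and set
\[H=\l\{\,ww'\in E(G_{n,p})\ :\ w\in B_u,\ w'\in B_v,\ uv\in E(\Gamma_0)\,\r\}\sub G_{n,p}.\]
Conditional on $\Gamma_0$ the edges of $H$ are the edges of $G_{n,p}$ inside a fixed family of $\Om(e(\Gamma_0)t^2)$ pairs, a sum of independent indicators of mean $\Om(e(\Gamma_0)t^2p)=\Om((pn)^{2-a/b}p^{-1})=\Om(p^{1-a/b}n^{2-a/b})$, so $e(H)=\Om(p^{1-a/b}n^{2-a/b})$ a.a.s.\ by a Chernoff bound.

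To control copies of $\c T^\ell$ in $H$, project each vertex of $[n]$ to its block; since $H$ has no edge inside a block this is a graph homomorphism from any copy into $\Gamma_0$. A \emph{non-degenerate} copy (injective projection) projects to a copy of a member of $\c T^\ell$ in $\Gamma_0$ lifted by choosing one vertex in each of its $v_0$ blocks and demanding the $e_0$ resulting pairs lie in $G_{n,p}$, so $\E_{G_{n,p}}[\#\,\text{non-degenerate copies in }H]\le o(n_0^{2-a/b})\,t^{v_0}p^{e_0}=o\bigl(p^{1-a/b}n^{2-a/b}\cdot p^{(\ell-1)(\kap-1)}\bigr)=o(p^{1-a/b}n^{2-a/b})$ since $\kap\ge 2$, which is negligible a.a.s.\ by Markov. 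A \emph{degenerate} copy projects to a proper homomorphic image $Q$ of a member of $\c T^\ell$ (obtained by identifying pairwise non-adjacent vertices), and for each of the $O(1)$ possibilities for $Q$ the same count gives $\E_{G_{n,p}}[\#\,\text{such copies}]\le (\#\{Q\text{-copies in }\Gamma_0\})\,t^{|V(Q)|}p^{e_0}$.

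The crux---and the step I expect to be the main obstacle---is to show that these degenerate terms are also $o(p^{1-a/b}n^{2-a/b})$. A homomorphic image $Q$ of $\c T^\ell$ need not be a forest: identifications can create short cycles and high-degree vertices, so a bound on $\#\{Q\text{-copies in }\Gamma_0\}$ does not follow from Theorem~\ref{thm:BukhConlon} but rather requires the machinery underlying Bukh's random polynomial graphs---bounding, in expectation over the choice of the defining polynomials, the number of solutions (including the ``degenerate'' solutions) of the polynomial systems attached to $Q$. Granting an estimate of the shape $\#\{Q\text{-copies in }\Gamma_0\}=O\bigl(n_0^{|V(Q)|-(a/b)e'(Q)}\bigr)$ for a suitable $e'(Q)\le e(Q)$, the balancedness of $(T,R)$ together with $\kap\ge 2$ and $\ell\ge\ell_0$ large (and the separate handling of $p$ near $1$) make the remaining bookkeeping routine, and we conclude that a.a.s.\ $G_{n,p}$ has a $\c T^\ell$-free subgraph---namely $H$ minus one edge per copy---on $e(H)-o(e(H))=\Om(p^{1-a/b}n^{2-a/b})$ edges.
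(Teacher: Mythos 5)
There is a genuine gap, and it is not only the estimate you flag as ``the crux.'' First, a bookkeeping slip: a degenerate copy has all $v_0$ of its vertices constrained to blocks, so the relevant factor is $t^{v_0}$, not $t^{|V(Q)|}$, and with $t\approx 1/p$ this changes the exponents decisively. Second, and more seriously, even granting the natural estimate $\#\{Q\text{-copies in }\Gamma_0\}=O\big(n_0^{|V(Q)|-(a/b)e(Q)}\big)$ for every homomorphic image $Q$, the deletion step fails: the degenerate copies can vastly outnumber $e(H)$. Take $T=K_{1,3}$ rooted at its leaves (so $\rho(T)=3$, target $\Theta(p^{2/3}n^{5/3})$) and $p$ a small constant, so $t\ge 2$. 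The graph $\Gamma_0$ is $K_{3,\ell}$-free but contains $\Theta(n_0^{3})$ copies of $K_{3,j}$ for $j\le \ell-1$; lifting such a copy by taking \emph{two} distinct vertices of $G_{n,p}$ inside each center block produces genuine copies of $K_{3,\ell}$ in $H$, and their expected number is $\Theta(n_0^3\cdot p^{O(1)})=\Theta(n^{3})\gg n^{5/3}\approx e(H)$. So $H$ is nowhere near $\c{T}^\ell$-free after deleting one edge per copy, no matter how good your count of $Q$-copies in $\Gamma_0$ is; the ``remaining bookkeeping'' is not routine but false as stated.

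What is missing is a mechanism that forbids these collapses, and this is where the paper diverges from your plan. Instead of a blow-up, the paper projects $G_{n,p}$ by a uniformly random map onto a small extremal graph $M$ and keeps an edge only if its image is an edge of $M$ \emph{and} no other edge sharing a vertex has the same image (condition (2) of Proposition~\ref{prop:relTuran}); this pruning guarantees that any forbidden copy surviving in the subgraph projects to a \emph{local isomorphic} image, killing exactly the configurations in the example above (two centers in one block adjacent to the same root give two incident edges with equal image, so both are discarded). One then needs $M$ to be free of all of $\c{L}(\c{T}^\ell)$, not just $\c{T}^\ell$, and this is supplied by Proposition~\ref{prop:treeDensity} (local isomorphisms of rooted forests do not decrease the rooted density) together with Proposition~\ref{prop:randPolyGen} (the random polynomial construction works for the union of the finitely many families $\c{F}_*^{\ell_*}$ with $F_*\in\c{L}(T)$). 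Your proposal contains neither the pruning idea nor a substitute for these two propositions, so the argument does not go through in its present form.
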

%Note that a weaker lower bound of the form $p n^{2-\f{a}{b}}$ immediately follows from Theorem~\ref{thm:BukhConlon}.  Indeed, one can fix any $n$-vertex graph $G$ achieving $\ex(n,\c{T}^\ell)$ and note that $\ex(G_{n,p},\c{T}^\ell)\ge |E(G_{n,p})\cap E(G)|$, with this latter quantity being highly concentrated around $p \ex(n,\c{T}^\ell)$. 

\textbf{Note}: the hypothesis $pn\log n\to \infty$ here and in Theorem~\ref{thm:mainGeneralized} below corrects the moderately stronger hypothesis $pn^2\to \infty$ which was (incorrectly) claimed in the journal version of this article.

Theorem~\ref{thm:main} is known to be sharp in certain cases when $p$ is sufficiently large.  For example, work of Morris and Saxton~\cite{morris2016number} shows that Theorem~\ref{thm:main} gives the correct lower bound on $\ex(G_{n,p},K_{s,t})$ provided $t$ is sufficiently large in terms of $s$ whenever $p\gg n^{-(s-1)/(st-1)}$; see the concluding remarks in Section~\ref{sec:con} for more on this.

While Theorem~\ref{thm:main} is the main goal of this paper, essentially the same argument will allow us to extend this result to generalized Tur\'an numbers.  Given two graphs $G,H$, we define $\c{N}_H(G)$ to be the number of copies of $H$ in $G$, i.e.\ the number of subgraphs $K\sub G$ which are isomorphic to $H$.  Given a family of graphs $\c{F}$, we define the \textit{generalized relative Tur\'an number}
\[\ex(G,H,\c{F})=\max\{\c{N}_H(G'):G'\sub G,\ G' \tr{ is }\c{F}\tr{-free}\}.\]
For example, $\ex(G,K_2,\c{F})$ is the maximum number of edges that an $\c{F}$-free subgraph of $G$ can have, which is just the relative Tur\'an number $\ex(G,\c{F})$.  When $G=K_n$ we write $\ex(n,H,\c{F}):=\ex(K_n,H,\c{F})$ and call this the \textit{generalized Tur\'an number}, which is the maximum number of copies of $H$ that an $\c{F}$-free $n$-vertex graph can contain.  Generalized Tur\'an numbers were first systematically studied by Alon and Shikhelman~\cite{AS}, and since then many results have been proven, see for example~\cite{gerbner2019counting,gishboliner2020generalized,gyHori2018maximum}.  

In this work we study the function $\ex(G,H,\c{F})$ when the host graph $G$ is a random graph.  Some work  in this direction has been done by Alon, Kostochka, and Shikhelman~\cite{alon2016many} and by Samotij and Shikhelman~\cite{samotij2020generalized}.  These papers only consider the case when the graphs $\c{F}$ are non-bipartite, and as far as we are aware this is the first paper to consider the bipartite case of this problem.  To state our result, we define the \textit{density} of a graph $H$ by
\[m(H)=\max\left\{\f{e(H')}{v(H')}:H'\sub H\right\},\]
where here and throughout $v(H),e(H)$ denote the number of vertices and edges of $H$.
With this we have the following, which immediately implies Theorem~\ref{thm:main} by taking $H=K_2$.
\begin{thm}\label{thm:mainGeneralized}
	Let $H$ be a graph with $v$ vertices and $e$ edges, and let $(T,R)$ be a rooted tree with $\rho(T)\ge \f{b}{a}$ for $a,b$ positive integers. Then there exists some $\ell_0$ depending only on $(T,R)$ and $H$ such that, for all $\ell\ge \ell_0$, we have for all $p=p(n)$ with $p^{m(H)}n\to \infty$ and $pn\log n\to \infty$ that a.a.s.
	\[\ex(G_{n,p},H,\c{T}^\ell)=\Om\left(p^{e-\f{a}{b}\cdot e}n^{v-\f{a}{b}\cdot e}\right).\]
\end{thm}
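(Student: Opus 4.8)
The plan is to build, for each range of $p$, a suitable subgraph of $G_{n,p}$ out of the Bukh--Conlon random polynomial graph. Fix positive integers $a,b$ with $b/a\le\rho(T)$, and for a prime power $q$ let $\Gamma_q$ be the random graph on $\F_q^b$ in which $x\sim y$ precisely when $f_1(x,y)=\cdots=f_a(x,y)=0$, the $f_i$ being independent uniformly random polynomials of some large bounded degree. From (the proof of) Theorem~\ref{thm:BukhConlon} and Bukh's random algebraic method~\cite{bukh2015random} I will use the following: for $\ell\ge\ell_0(T,R)$ and $q\to\infty$, a.a.s.\ over the $f_i$ the graph $\Gamma_q$ becomes $\c{T}^\ell$-free after deleting $o(q^{2b-a})$ edges, it is pseudorandom of density $\Theta(q^{-a})$, and for any fixed graph $F$ the number of copies of $F$ in $\Gamma_q$ is $\Theta\big(q^{b\cdot v(F)-a\cdot e(F)}\big)$ when $m(F)\le b/a$ and at most a constant times a smaller power of $q$ otherwise. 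In particular $\Gamma_q$ has $\Theta\big((q^b)^{v-(a/b)e}\big)$ copies of $H$; we may assume $m(H)\le b/a\le\rho(T)$, since otherwise the claimed bound is dominated by $\c{N}_H$ of the extremal $\c{T}^\ell$-free graph and Theorem~\ref{thm:mainGeneralized} is either vacuous or immediate.

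First, the dense regime, roughly $p\ge n^{-a/b}$. Choose $q$ with $q^b$ as large as possible below $n$, so $\Gamma:=\Gamma_q$ is a $\c{T}^\ell$-free graph on $(1-o(1))n$ vertices of maximum degree $O(n^{1-a/b})$ with $\Theta(n^{v-(a/b)e})$ copies of $H$. Since $pn\gg n^{1-a/b}$, I expect $G_{n,p}$ to contain $\Gamma$ as a (nearly spanning) subgraph a.a.s., and would prove this by the second moment method applied to the number of injections $\phi\colon V(\Gamma)\to[n]$ with $\phi(\Gamma)\sub G_{n,p}$ — in the spirit of Riordan's theorem on spanning subgraphs of $G_{n,p}$ — with the near-regularity and pseudorandomness of $\Gamma$ making the variance estimate tractable. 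Any such $\phi(\Gamma)$ is a $\c{T}^\ell$-free subgraph of $G_{n,p}$ with $\Theta(n^{v-(a/b)e})\ge p^{e-(a/b)e}\,n^{v-(a/b)e}$ copies of $H$ (as $p\le1$), which is the desired bound.

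Second, the sparse regime, roughly $p\le n^{-a/b}$. Now choose $q$ with $q^b=(1-o(1))pn$ (which tends to infinity), put $\beta=n/q^b\approx 1/p$, let $\widehat\Gamma$ be the $\beta$-blow-up of $\Gamma_q$ on $\F_q^b\times[\beta]\equiv[n]$, and set $\Gamma'=\widehat\Gamma\cap G_{n,p}$. A first-moment calculation gives $\E[\c{N}_H(\Gamma')]=\Theta\big(p^{e-(a/b)e}\,n^{v-(a/b)e}\big)$ — this is where the hypothesis $p^{m(H)}n\to\infty$ enters, guaranteeing the count is of the stated order and, by a routine second moment or bounded-differences argument, concentrated. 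It then remains to delete edges destroying every copy of $\c{T}^\ell$ while losing only a lower-order fraction of the copies of $H$: deleting one edge per copy of $\c{T}^\ell$ removes at most (number of copies of $\c{T}^\ell$)$\times$(maximum number of copies of $H$ through a single edge) copies of $H$, and since the latter maximum is a.a.s.\ $O\big(p^{(e-1)(1-a/b)}n^{(v-2)-(a/b)(e-1)}\big)$, it suffices to show that a.a.s.\ $\Gamma'$ has $o\big(p^{1-a/b}n^{2-a/b}\big)$ copies of $\c{T}^\ell$.

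This last point is the main obstacle. Since $\Gamma_q$ is $\c{T}^\ell$-free, every copy of $\c{T}^\ell$ in $\widehat\Gamma$ projects to a proper homomorphic image $J$ of $\c{T}^\ell$ lying inside $\Gamma_q$, and the expected number of copies of $\c{T}^\ell$ in $\Gamma'$ over a given $J$ is at most (number of copies of $J$ in $\Gamma_q$)$\,\times\,\beta^{v(\c{T}^\ell)}\times p^{e(\c{T}^\ell)}$, the first factor being controlled by the random-algebraic count above. Summing over the finitely many relevant $J$, one must check this is $o\big(p^{1-a/b}n^{2-a/b}\big)$; this is exactly where $\ell\ge\ell_0$ is used — the configurations coming from $\c{T}^\ell$ must be ``rigid'' so that the factor $p^{e(\c{T}^\ell)}$, whose exponent grows with $\ell$, overwhelms $\beta^{v(\c{T}^\ell)}$ — and where $p\le n^{-a/b}$ (equivalently $\beta\ge n^{a/b}$) is needed. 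Finally one verifies the two regimes overlap: the sparse construction in fact eliminates all copies of $\c{T}^\ell$ for all $p$ up to some $n^{-c}$ with $c<a/b$ strictly (e.g.\ $c=\tfrac1{s+1}<\tfrac1s$ for $\c{T}^\ell=K_{\ell,s}$), while the embedding argument works for $p$ down to roughly $n^{-a/b}$, so together they cover every $p$ with $p^{m(H)}n\to\infty$.
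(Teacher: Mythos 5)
Your outline has two genuine gaps, one in each regime. In the dense regime you assert that $G_{n,p}$ a.a.s.\ contains the specific, nearly spanning algebraic graph $\Gamma$ with $\Theta(n^{2-a/b})$ edges, ``in the spirit of Riordan's theorem''. Riordan's theorem does not apply here: its density parameter is at least $e(\Gamma)/(n-2)=\Theta(n^{1-a/b})$, which is polynomial in $n$, so its hypothesis cannot be met for any $p<1$; more generally, embedding a prescribed almost-spanning graph whose degrees grow polynomially is not a routine second-moment matter. Concretely, the Bukh--Conlon cleaning only controls the common neighbourhoods of $|R|$-tuples of roots, so $\Gamma$ may retain pairs of vertices with common neighbourhood of size close to $q^{b-a}=n^{1-a/b}$, which exceeds the maximum codegree $\approx p^2n$ of $G_{n,p}$ when $p$ is near $n^{-a/b}$ --- then no embedding exists at all. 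So the dense half rests on an unproven, and at this level of generality doubtful, containment statement, and whether it really meets your sparse regime (whose upper limit in $p$ is constrained by the requirement that $\ell_0$ not depend on $p$) is exactly the kind of gluing that needs proof. The paper avoids any embedding: Proposition~\ref{prop:relTuran} randomly projects $G_{n,p}$ onto a small extremal graph on $m=\Theta(pn)$ vertices and pulls back, uniformly in $p$.

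In the sparse regime, the step you yourself call ``the main obstacle'' is the heart of the theorem and is not carried out. You must sum, over all homomorphic images $J$ of members of $\c{T}^\ell$ arising from the blow-up (note these are strictly more general than the paper's local isomorphisms, since in a blow-up two edges sharing a vertex can be identified, violating condition (c)), the quantity $\c{N}_J(\Gamma_q)\,\beta^{v(K)}p^{e(K)}$ and beat $p^{1-a/b}n^{2-a/b}$. That requires high-probability (not merely expected) control of $\c{N}_J(\Gamma_q)$ for every such $J$ --- Bukh--Conlon only bound moments of rooted counts, not all subgraph counts a.a.s.\ --- and a density inequality for every image $J$, uniform in $\ell$ and $p$. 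This is precisely the structural content the paper supplies through Propositions~\ref{prop:treeDensity} and~\ref{prop:randPolyGen}: the rooted density of a tree cannot decrease under local isomorphism, so the algebraic construction can be made free of the entire family $\c{L}(\c{T}^\ell)$ with no loss in the $H$-count; without an analogue of this your sparse regime is a plan, not a proof. Two further (more repairable) issues: your deletion step bounds the loss by the number of forbidden copies times the \emph{maximum} number of $H$-copies through an edge and asserts this maximum is a.a.s.\ of average order, which is false for small $p$ (e.g.\ $H=K_3$ with $p^2n=n^{o(1)}$); the standard fix is to first-moment count pairs consisting of a forbidden copy and an $H$-copy sharing an edge. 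And the reduction ``we may assume $m(H)\le b/a$'' is unjustified --- when $m(H)>b/a$ the statement is neither vacuous nor obviously immediate, and the paper's argument needs no such case split.
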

We note that it is easy to show by a first moment argument that $\c{N}_H(G_{n,p})=0$ with high probability if $p^{m(H)}n\to 0$, so it is reasonable to only consider $p$ with $p^{m(H)}n\to \infty$.

\section{Key Propositions}
In this section we state without proof the tools we need in order to prove Theorem~\ref{thm:mainGeneralized}.  These results are somewhat general, and we have hope that they will find applications outside of the present work.

We begin with a generalization of Theorem~\ref{thm:BukhConlon}.  To state this result; given a graph $F$ and a set $R\subsetneq V(F)$, we will say that the pair $(F,R)$ is a \textit{rooted graph} and refer to $R$ as its set of \textit{roots}.  We define $\c{F}^\ell$ and $\rho(F)$ exactly analogous to how these were defined for rooted trees. 

\begin{prop}\label{prop:randPolyGen}
	Let $H$ be a graph with $v$ vertices and $e$ edges, and let $\{(F_1,R_1),\ldots,(F_t,R_t)\}$ be a set of rooted graphs such that $\rho(F_i)\ge \f{b}{a}$ for all $i$ with $a,b$ positive integers.  Then there exists some $\ell_0$ depending only on $H$ and $\{(F_1,R_1),\ldots,(F_t,R_t)\}$ such that, for all $\ell\ge \ell_0$, 
	\[\ex(n,H,\bigcup \c{F}_i^\ell)=\Om\l(n^{v-\f{a}{b}\cdot e}\r).\]
\end{prop}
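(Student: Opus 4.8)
The plan is to adapt the random polynomial (algebraic) graph of Bukh, as used by Bukh and Conlon, so as to simultaneously forbid all of the families $\c{F}_i^\ell$ while retaining many copies of $H$. Fix a sufficiently large constant $d$ depending only on $H$ and the $(F_i,R_i)$, and a prime power $q$; let $G_0$ be the random graph on $\F_q^b$ in which $u\sim w$ exactly when $f_1(u,w)=\cdots=f_a(u,w)=0$, the $f_j$ being independent uniformly random \emph{symmetric} polynomials of degree at most $d$ in $2b$ variables (symmetry is needed so that $G_0$ may contain a non-bipartite $H$). Since no $F_i$ has an isolated non-root vertex we may assume $b\ge a$, and since the asserted bound is vacuous otherwise we may assume $v-\f{a}{b}\cdot e>0$; passing from $n=q^b$ to arbitrary $n$ is routine (monotonicity of $\ex$ together with the density of prime powers). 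The forbidden-freeness will come from Bukh's lemma: with probability $1-o(1)$ the graph $G_0$ is algebraically pseudorandom, meaning that for every rooted graph $(J,W)$ with $|V(J)|$ bounded and every image of $W$, the number of extensions to a copy of $J$ is $O(1)$ whenever the system of edge-equations cutting out these extensions is \emph{rigid} — its scalar variables admit a matching to distinct equations. Establishing this lemma uniformly over all root placements, by confining the bad polynomial tuples to a proper subvariety instead of taking a union bound, is the heart of Bukh's method, which I invoke.

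\textbf{Forbidden-freeness.} Fix $i$. The hypothesis $\rho(F_i,R_i)\ge\f{b}{a}$ unwinds to $a\,e_S(F_i)\ge b|S|$ for all $S\sub V(F_i)\sm R_i$, which is precisely Hall's criterion for a matching of the scalar variables of $V(F_i)\sm R_i$ into the edge-equations; thus the extension system is rigid and, by the lemma, every image of $R_i$ in $G_0$ has at most $C_i=O(1)$ extensions to a copy of $F_i$. Hence no image of $R_i$ is shared by $\ell$ distinct labelled copies of $F_i$ once $\ell>\ell_0:=1+\max_i C_i$, so $G_0$ is $\bigcup_i\c{F}_i^\ell$-free; note that $\ell_0$ and $d$ depend only on $H$ and the $(F_i,R_i)$, not on $\ell$.

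\textbf{Counting copies of $H$.} First a combinatorial step: run a closure on $V(H)$, starting from $A=\emptyset$ and, while some $S\sub V(H)\sm A$ satisfies $e(H[S])+e_H(S,A)>\f{b}{a}|S|$ (edges inside $S$, plus edges from $S$ to $A$), replacing $A$ by $A\cup S$. Every step strictly increases $a\,e(H[A])-b|A|$ from its initial value $0$, so the process terminates at a set $A$ with $a\,e(H[A])\ge b|A|$, with $A\ne V(H)$ because $a\,e(H)<bv$, and for which $V(H)\sm A$ is ``unconstrained over $A$'' in the sense that no $S\sub V(H)\sm A$ has $a(e(H[S])+e_H(S,A))>b|S|$. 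Fix a set $Z$ of $|A|$ vertices of $G_0$ and let $G:=G_0$ with all edges added inside $Z$. Any copy in $G$ of a member of some $\c{F}_i^\ell$ that uses one of the new edges has at least two vertices pinned inside the bounded set $Z$; since enlarging the root set of $(F_i,R_i)$ only raises its rooted density, the lemma still caps by $O(1)$ the extensions of such a pinned configuration, and after increasing $\ell_0$ — still only in terms of $H$ and the $(F_i,R_i)$ — the graph $G$ is free of every $\c{F}_i^\ell$. Now count the copies of $H$ in $G$ in which $A$ maps into $Z$: since $Z$ is a clique the edges of $H[A]$ are automatically present, and for $d$ large the $e-e(H[A])$ remaining edge-conditions are independent events of probability $q^{-a}$ each, so the expected number of such copies is $\Theta\big(q^{b(v-|A|)-a(e-e(H[A]))}\big)=\Om\big(q^{bv-ae}\big)=\Om\big(n^{v-\f{a}{b}\cdot e}\big)$, the inequality using $a\,e(H[A])\ge b|A|$. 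A second-moment computation — in which the ``over $A$'' property of the closure exactly forces every cross term to be $O(\text{mean}^2)$ — shows this count is at least half its mean with probability bounded away from $0$. Intersecting this with the probability-$(1-o(1))$ event that $G_0$ (hence $G$) is $\bigcup_i\c{F}_i^\ell$-free yields a single graph witnessing $\ex(n,H,\bigcup_i\c{F}_i^\ell)=\Om\big(n^{v-\f{a}{b}\cdot e}\big)$. (When $m(H)\le\f{b}{a}$ the closure returns $A=\emptyset$, $Z=\emptyset$, $G=G_0$, and this is just a second moment for $\c{N}_H(G_0)$.)

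The step I expect to be the main obstacle is reconciling the two competing demands on $G$: it must be \emph{genuinely} free of every $\c{F}_i^\ell$ — which rests entirely on the uniform-in-placement rigidity pulled from $\rho(F_i)\ge\f{b}{a}$, i.e.\ on the full strength of Bukh's lemma — while still containing $\Om\big(n^{v-\f{a}{b}\cdot e}\big)$ copies of $H$, even though a pure random algebraic graph of this edge density a.a.s.\ contains no copy of $H$ at all once $H$ has a subgraph denser than $\f{b}{a}$. The clique-gadget-plus-closure device is what bridges this gap, and its delicate point is checking that planting a bounded clique inside $G_0$ creates no new copy of any forbidden graph.
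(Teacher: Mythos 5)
The step on which everything rests --- the ``algebraic pseudorandomness lemma'' asserting that with probability $1-o(1)$, for \emph{every} bounded rooted graph whose extension system is ``rigid'' and \emph{every} placement of the roots the number of extensions is $O(1)$ --- is not an off-the-shelf result you can invoke: in this generality it \emph{is} the technical heart of the proposition. What exists in the literature (Bukh, Bukh--Conlon, Ma--Yuan--Zhang) is proved for their specific settings by an explicit moment computation, whose engine is the rooted-density inequality $e(K)\ge \rho(F_i)\bigl(v(K)-|R_i|\bigr)$ for every $K$ that is a union of at most $s$ rooted copies of $F_i$, followed by the Lang--Weil-type dichotomy ($|C|\le c_i$ or $|C|\ge q/2$) and Markov; one then either takes a union bound over the $q^{b|R_i|}$ root placements (choosing $s>b|R_i|$) or, as in the paper, simply deletes a vertex from each of the few bad placements. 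Your description of the mechanism (``confining the bad polynomial tuples to a proper subvariety instead of taking a union bound'') is not how these arguments run, and your Hall-condition reformulation of $\rho(F_i)\ge \frac{b}{a}$, while correct as a restatement, is not by itself a proof that extensions are bounded. So as written, the forbidden-freeness half of your argument cites a lemma whose proof is exactly the missing content. Two side reductions are also wrong: absence of isolated non-root vertices does not let you assume $b\ge a$ (it only forces $\rho(F_i)\ge 1/2$), and the case $v-\frac{a}{b}e\le 0$ is not vacuous --- the bound still asserts that some $\bigcup\mathcal{F}_i^\ell$-free graph contains a copy of $H$, which requires an argument (it rules out, e.g., $H$ containing a member of some $\mathcal{F}_i^\ell$); fortunately neither assumption is actually needed.

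On architecture: your clique-planting, closure process, and second moment are solving a problem the paper never encounters. The paper does not need the $H$-count to hold a.a.s.\ or to concentrate at all: it bounds $\mathbb{E}[\mathcal{N}_H(G)]\ge q^{-ae}\binom{N}{v}$ (large in expectation even when $H$ has subgraphs denser than $\frac{b}{a}$), shows $\mathbb{E}[B_i]=o(N^{1-\frac{a}{b}e})$ for the number of root placements with more than $c_i$ extensions, and deletes one vertex per bad placement; the resulting $G'$ is $\bigcup\mathcal{F}_i^\ell$-free with probability $1$, so a single first-moment bound $\mathbb{E}[\mathcal{N}_H(G')]=\Omega(N^{v-\frac{a}{b}e})$ already produces the desired graph --- no a.a.s.\ freeness, no planted clique, no Paley--Zygmund. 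If you insist on your route, the planted-clique freeness step must be phrased for the rooted graphs obtained from $F_i$ by \emph{deleting} the edges inside $W$ and adding $W$ to the root set (your one-line ``enlarging the roots only raises the density'' is about the right object only after that deletion, though the inequality does survive it), and the closure/second-moment computation, which looks plausible, would have to be carried out; all of this is avoidable. In short: the skeleton could be repaired, but the lemma you cite is the theorem, and the extra gadgetry buys nothing over the paper's expectation-plus-deletion argument.
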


We emphasize that it is straightforward to derive Proposition~\ref{prop:randPolyGen} by using the approach of Ma, Yuan, and Zhang~\cite{ma2018some}, which itself is based heavily off of the random polynomial approach of Bukh and Conlon~\cite{bukh2018rational}.  We include the details for completeness in Section~\ref{sec:randPoly}.

Our lasts two results are new and utilize the concept of local isomorphisms.  For this definition, we adopt the convention $\phi(S):=\{\phi(x):x\in S\}$ whenever $\phi$ is a map and $S$ is a set.

\begin{defn}
	Given two rooted graphs $(F,R),(F',R')$, we say that a map $\phi:V(F)\to V(F')$ is a \textit{local isomorphism} if
	\begin{itemize}
		\item[(a)] The map $\phi$ is surjective with $\phi(R)\sub R'$,
		\item[(b)] Every $e\in E(F)$ has $\phi(e)\in E(F')$ (i.e.\ $\phi$ is a homomorphism), and
		\item[(c)] Every $e,f\in E(F)$ with $e\ne f$ and $e\cap f\ne \emptyset$ satisfy $\phi(e)\ne \phi(f)$.
	\end{itemize}
	Given a rooted graph $(F,R)$, we let $\c{L}(F)$ denote the set of rooted graphs $(F',R')$ such that there exists a local isomorphism $\phi:V(F)\to V(F')$, and given a set of rooted graphs $\c{F}$, we define $\c{L}(\c{F})=\bigcup_{F\in \c{F}} \c{L}(F)$. 
\end{defn}
Our main motivation for these definitions is the following result for graphs without roots, which allows us to transfer lower bounds for $\ex(m,H,\c{L}(\c{F}))$ to lower bounds for $\ex(G,H,\c{F})$.

\begin{prop}\label{prop:relTuran}
	Let $G,H$ be graphs, and let $\c{F}$ be a family of graphs.  If $m$ is an integer satisfying $m\ge 4|E(H)| \Del$ where $\Del$ is the maximum degree of $G$, then there exists a $G'\sub G$ which is $\c{F}$-free such that
	\[\c{N}_H(G')\ge \half \ex(m,H, \c{L}(\c{F})) m^{-v(H)}\cdot \c{N}_H(G).\]
\end{prop}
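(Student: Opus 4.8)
The plan is to fix an $m$-vertex graph $J$ that contains no copy of any member of $\c{L}(\c{F})$ and has $\ex(m,H,\c{L}(\c{F}))$ copies of $H$, and to pull $J$ back to a subgraph of $G$ via a random vertex colouring. Concretely, sample a uniformly random map $c\colon V(G)\to [m]=V(J)$ and let $G_1\sub G$ keep an edge $uv\in E(G)$ exactly when $c(u)c(v)\in E(J)$; the hope is that $G_1$ inherits the property of containing no member of $\c{F}$ while retaining a constant fraction of the copies of $H$ in $G$.

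The catch is that $G_1$ need not itself be $\c{F}$-free: if $\psi$ embedded some $F\in\c{F}$ into $G_1$, then $\phi:=c\circ\psi$ would send each edge of $F$ to an edge of $J$ with distinct endpoints (every edge of $G_1$ joins differently coloured vertices since $J$ is simple), but $\phi$ can fail condition (c) of the definition of local isomorphism, because two edges of $F$ sharing a vertex $x$ collapse whenever $c$ assigns the same colour to the two $\psi$-images of their other endpoints. So I would first pass to $G'\sub G_1$, keeping an edge $uv$ if and only if $u$ has exactly one $G_1$-neighbour of colour $c(v)$ and $v$ has exactly one $G_1$-neighbour of colour $c(u)$ — a rule symmetric in $u,v$ under which no vertex of $G'$ has two neighbours of the same colour. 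Then for every $F\in\c{F}$, any copy of $F$ in $G'$ with embedding $\psi$ yields a genuine local isomorphism $\phi=c\circ\psi$ from $(F,\emptyset)$ onto the subgraph $F'\sub J$ with vertex set $\phi(V(F))$ and edge set $\{\phi(f):f\in E(F)\}$: (a) is immediate, (b) holds since edges of $G'$ are non-monochromatic, and (c) is exactly what the cleanup enforces. Hence $F'\in\c{L}(\c{F})$ and $F'\sub J$, contradicting the choice of $J$; so $G'$ is $\c{F}$-free for every outcome of $c$.

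It then suffices to prove $\E[\c{N}_H(G')]\ge\half\cdot\ex(m,H,\c{L}(\c{F}))\cdot m^{-v(H)}\cdot\c{N}_H(G)$, since some outcome of $c$ must then witness the proposition. Fix a copy $K$ of $H$ in $G$ with vertex set $W$, and let $A_K$ be the event that $c|_W$ is injective and maps $E(K)$ into $E(J)$. As $c|_W$ is uniform on the $m^{v(H)}$ maps $W\to[m]$ and the number of injective homomorphisms $H\to J$ is at least $\c{N}_H(J)=\ex(m,H,\c{L}(\c{F}))$, we get $\Pr[A_K]\ge\ex(m,H,\c{L}(\c{F}))\cdot m^{-v(H)}$. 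On $A_K$ every edge of $K$ lies in $G_1$, and an edge $uv$ of $K$ fails to survive to $G'$ only if some $G$-neighbour of $u$ outside $W$ gets colour $c(v)$, or some $G$-neighbour of $v$ outside $W$ gets colour $c(u)$ (the restriction to outside $W$ being automatic, since a vertex of $W$ of colour $c(v)$ distinct from $v$ would violate injectivity of $c|_W$). Colours of vertices outside $W$ are independent of $c|_W$, so a union bound over the at most $2|E(H)|\Del$ relevant vertex–colour coincidences bounds the failure probability by $2|E(H)|\Del/m\le\half$, using $m\ge 4|E(H)|\Del$. Thus $\Pr[K\sub G']\ge\half\cdot\ex(m,H,\c{L}(\c{F}))\cdot m^{-v(H)}$, and since $G'\sub G$ forces $\c{N}_H(G')$ to count exactly the copies of $H$ in $G$ that survive into $G'$, summing over all $\c{N}_H(G)$ of them yields the claimed bound on the expectation.

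The main obstacle is reconciling the two competing demands on $G'$: the cleanup must be severe enough that the induced colour map on any copy of an $F\in\c{F}$ is honestly locally injective (so the freeness of $J$ can be invoked), yet mild enough that only a bounded fraction of the $H$-copies is lost — and it is this second, quantitative requirement that both necessitates and is powered by the hypothesis $m\ge 4|E(H)|\Del$. Some care is also needed to state the deletion rule symmetrically in the two endpoints of an edge and to verify that the ``bad'' neighbours responsible for a deletion may be taken outside $W$, so that their colours are genuinely independent of the event $A_K$ being conditioned on.
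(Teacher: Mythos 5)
Your proposal is correct and follows essentially the same route as the paper: fix an extremal $\c{L}(\c{F})$-free graph on $m$ vertices, take a uniformly random map $V(G)\to[m]$, keep only edges that land on edges of that graph and do not collide with an incident edge of the same colour pair (your ``exactly one neighbour of each colour'' cleanup is just a mild variant of the paper's condition (2)), then argue $\c{F}$-freeness via the induced local isomorphism and bound the survival probability of each $H$-copy by the same $2|E(H)|\Del/m\le\half$ union bound. The only differences are cosmetic (pruning based on $G_1$-neighbours rather than all $G$-neighbours), so no further comparison is needed.
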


We anticipate that Proposition~\ref{prop:relTuran} will be quite useful in obtaining lower bounds for generalized relative Tur\'an numbers.  With this result in hand, we see that to prove Theorem~\ref{thm:mainGeneralized}, it suffices to prove effective lower bounds on $\ex(n,H,\c{L}(\c{T}^\ell))$.  We will do this by utilizing Proposition~\ref{prop:randPolyGen} together with the following result, which says  that the rooted density of a forest can only increase after taking local isomorphisms. 
\begin{prop}\label{prop:treeDensity}
	If $(T,R_T)$ is a rooted forest and $(F,R_F)\in \c{L}(T)$, then $\rho(T)\le \rho(F)$.
\end{prop}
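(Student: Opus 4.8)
The plan is to fix a local isomorphism $\phi\colon V(T)\to V(F)$ witnessing $(F,R_F)\in\c L(T)$ and to transfer a set $S'\sub V(F)\sm R_F$ achieving $\rho(F)$ back to a set $S\sub V(T)\sm R_T$ with $e_S(T)/|S|\le e_{S'}(F)/|S'|$; since $\rho(T)\le e_S(T)/|S|$, this yields $\rho(T)\le\rho(F)$. The natural candidate is the full preimage $S=\phi^{-1}(S')$. First I would check $S\sub V(T)\sm R_T$: if some $x\in S$ were a root of $T$, then $\phi(x)\in\phi(R_T)\sub R_F$ by condition~(a), contradicting $\phi(x)\in S'$ and $S'\cap R_F=\emptyset$. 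So $S$ is a valid competitor in the minimum defining $\rho(T)$.

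Next I would compare the two quantities. For the numerator, every edge $e\in E(T)$ incident to a vertex of $S$ maps under~$\phi$ to an edge $\phi(e)\in E(F)$ (condition~(b)) incident to a vertex of $S'=\phi(S)$; thus $\phi$ sends edges counted by $e_S(T)$ into edges counted by $e_{S'}(F)$. The crucial point is that this map on edges is injective on the relevant set: two distinct edges $e,f$ of $T$ both incident to $S$ must share... no — here is the subtlety. Condition~(c) only guarantees $\phi(e)\ne\phi(f)$ when $e$ and $f$ \emph{intersect}. So to apply it I need the edges I am comparing to share a vertex, which is not automatic for two arbitrary edges incident to $S$. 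The right way to organize the count is to use that $T$ is a \emph{forest}: I would argue vertex-by-vertex. For each $x\in S$, the edges of $T$ at $x$ all pairwise intersect (at $x$), so by~(c) they map to $\deg_T(x)$ distinct edges of $F$, all incident to $\phi(x)\in S'$. Hence $\deg_F(\phi(x))\ge \deg_T(x)$ for every $x\in S$ — but I must be careful about double-counting edges within $S$ when I sum $\deg$ over $S$.

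The clean approach: $e_S(T)=\sum_{x\in S}\deg_T(x)-e(T[S])$, where $T[S]$ is the induced subgraph (edges counted once if both endpoints in $S$, which we subtract off the overcount). Since $T$ is a forest, $e(T[S])\le |S|-1<|S|$, so $e_S(T)\ge \sum_{x\in S}\deg_T(x)-|S|+1$; more usefully I want an \emph{upper} bound on $e_S(T)$, namely $e_S(T)\le\sum_{x\in S}\deg_T(x)$ trivially, but that loses the forest hypothesis. Instead I would go the other direction and bound $e_{S'}(F)$ from below: partition $S'$ using $\phi$, and for each fiber argue that $\phi$ injects the edges of $T$ incident to that fiber into edges of $F$ incident to the corresponding vertex of $S'$. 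Here forest-ness of $T$ is exactly what makes the local-injectivity condition~(c) upgrade to genuine injectivity on $E_S(T):=\{e\in E(T): e\cap S\ne\emptyset\}$: if $e,f\in E_S(T)$ are distinct with $\phi(e)=\phi(f)$, then by~(c) they are disjoint, but then the path in $T$ joining them together with $e,f$... produces a cycle once we note $\phi(e)=\phi(f)$ forces the endpoints to coincide in a way incompatible with $T$ acyclic — this is the step that needs the most care and is where I expect the main obstacle. Granting the injectivity $|E_S(T)|\le |E_{S'}(F)|$, and noting $|S'|=|\phi(S)|\le|S|$, I get
\[
\rho(T)\le\frac{e_S(T)}{|S|}=\frac{|E_S(T)|}{|S|}\le\frac{|E_{S'}(F)|}{|S'|}=\frac{e_{S'}(F)}{|S'|}=\rho(F),
\]
which is the claim. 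So the skeleton is short; the real work is verifying that condition~(c) plus acyclicity of $T$ forces the induced map $E_S(T)\to E_{S'}(F)$ to be injective, and I would handle that by a minimal-counterexample / shortest-cycle argument in $T$.
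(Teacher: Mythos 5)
Your proof collapses at exactly the step you flagged as the main obstacle: for the \emph{full} preimage $S=\phi^{-1}(S')$, the map $\phi^*$ is in general \emph{not} injective on $E_S(T)$, and no acyclicity argument can rescue this, because condition (c) only constrains edges of $T$ that share a vertex, while the offending identifications are created inside $F$ (which need not be a forest). Simplest counterexample: let $T$ be two disjoint edges $ab$ and $cd$, let $F$ be a single edge $xy$, all root sets empty, and $\phi(a)=\phi(c)=x$, $\phi(b)=\phi(d)=y$. This is a local isomorphism, the optimizing set is $S'=\{x,y\}$ with $\rho(F)=\f{1}{2}$, and both edges of $T$ map to $xy$, so even your intermediate claim $|E_S(T)|\le |E_{S'}(F)|$ is false ($2>1$); the final ratio only survives because $|S|=2|S'|$, which your injectivity-based accounting does not see. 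Connectedness does not help: take $T$ the path $b_1a_1ca_2b_2$ and $F$ the triangle $wu_1u_2$ with $\phi(c)=w$, $\phi(a_1)=u_1$, $\phi(b_1)=u_2$, $\phi(a_2)=u_2$, $\phi(b_2)=u_1$. This is a local isomorphism from a tree, yet the disjoint edges $a_1b_1$ and $a_2b_2$ have the same image, so the ``joining path plus $\phi(e)=\phi(f)$ forces a cycle in $T$'' idea has no contradiction to reach.

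The paper's proof is structured precisely to avoid this trap: rather than the full preimage, it selects (via Lemma~\ref{lem:tech}) a \emph{transversal} $V\sub\phi^{-1}(X)$ that $\phi$ maps bijectively onto the optimizing set $X$ and on whose incident edges $\phi^*$ \emph{is} injective; then $|V|=|X|$ and $e_V(T)\le e_X(F)$ give the comparison immediately. The forest hypothesis is used exactly in constructing this good set, through a minimal-counterexample induction that deletes a vertex $u$ of degree at most one and, when the previously chosen set already contains a vertex $u'$ with $\phi(u')=\phi(u)$, performs the exchange $V=V'\sm\{u'\}\cup\{u\}$; the paper also notes this selection lemma genuinely fails without the forest hypothesis (e.g.\ $C_8\to C_4$). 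If you insist on working with the full preimage, the injectivity claim must be replaced by a different counting argument: bound, for each edge of $F$ incident to $S'$, the number of its preimage edges by the sizes of the relevant fibers $\phi^{-1}(x)$, and then exploit that $S'$ attains the minimum in $\rho(F)$ (so that removing any $W\sub S'$ shows the edges meeting only $W$ number at most $\rho(F)|W|$). That is a workable but essentially different argument from the one you sketch; as written, the proposal has a genuine gap.
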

We will only apply Proposition~\ref{prop:treeDensity} when $T$ is a tree, but its proof is somewhat more natural in the setting of forests. 

\section{Proof of Theorem~\ref{thm:mainGeneralized}}
We postpone proving the results of the previous section for the moment, and instead show how these propositions quickly imply our main result.  We begin with a deterministic result 
\begin{lem}\label{lem:localIsoTuran}
	Let $H$ be a graph with $v$ vertices and $e$ edges, and let $(T,R_T)$ be a rooted tree such that $\rho(T)\ge \f{b}{a}$ with $a,b$ positive integers.  Then there exists some $\ell_0$ depending only on $H$ and $(T,R_T)$ such that, for all $\ell\ge \ell_0$, 
	\[\ex(n,H,\c{L}(\c{T}^\ell))=\Om\l(n^{v-\f{a}{b}\cdot e}\r).\]
\end{lem}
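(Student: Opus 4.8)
The plan is to combine Proposition~\ref{prop:randPolyGen} with Proposition~\ref{prop:treeDensity} in a direct way. Recall that $\c{L}(\c{T}^\ell) = \bigcup_{F \in \c{T}^\ell} \c{L}(F)$, so a graph is $\c{L}(\c{T}^\ell)$-free if and only if it contains no member of $\c{L}(F)$ for any $F \in \c{T}^\ell$. Each $F \in \c{T}^\ell$ is itself built from $\ell$ copies of $T$ agreeing on $R_T$, and one should observe (this is the one structural point to verify) that any local isomorphic image of such an $F$ is again a member of $\c{F}^\ell$ for some suitable rooted graph $(F, R_F)$; more precisely, if $\phi$ is a local isomorphism on $F$, then restricting $\phi$ to a single copy of $T$ produces a rooted graph in $\c{L}(T)$, and the images of the $\ell$ copies all agree on the (image of the) root set. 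So $\c{L}(\c{T}^\ell)$ is contained in $\bigcup_i \c{F}_i^\ell$ where $\{(F_1, R_{F_1}), \dots, (F_t, R_{F_t})\}$ enumerates $\c{L}(T)$ — a finite set, since $T$ has finitely many vertices and local isomorphisms are surjective, so there are only finitely many isomorphism types of targets.

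Given this, the argument is: first apply Proposition~\ref{prop:treeDensity} to conclude that every $(F_i, R_{F_i}) \in \c{L}(T)$ satisfies $\rho(F_i) \ge \rho(T) \ge \f{b}{a}$. Then feed the finite family $\{(F_1, R_{F_1}), \dots, (F_t, R_{F_t})\}$ together with $H$ into Proposition~\ref{prop:randPolyGen}, which yields an $\ell_0$ depending only on $H$ and $\c{L}(T)$ (hence only on $H$ and $(T, R_T)$, since $\c{L}(T)$ is determined by $T$) such that for all $\ell \ge \ell_0$,
\[
\ex(n, H, \textstyle\bigcup_i \c{F}_i^\ell) = \Om\l(n^{v - \f{a}{b} \cdot e}\r).
\]
Finally, since being $(\bigcup_i \c{F}_i^\ell)$-free implies being $\c{L}(\c{T}^\ell)$-free (as $\c{L}(\c{T}^\ell) \sub \bigcup_i \c{F}_i^\ell$), any extremal construction for the left-hand side is also $\c{L}(\c{T}^\ell)$-free, giving $\ex(n, H, \c{L}(\c{T}^\ell)) \ge \ex(n, H, \bigcup_i \c{F}_i^\ell) = \Om(n^{v - \f{a}{b} \cdot e})$, as desired.

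The main obstacle I anticipate is the bookkeeping in the first paragraph: verifying cleanly that $\c{L}(\c{T}^\ell) \sub \bigcup_{(F,R_F) \in \c{L}(T)} \c{F}^\ell$ and that $\c{L}(T)$ is finite. The finiteness is routine — a local isomorphism is a surjection from the fixed finite set $V(T)$, so there are finitely many partitions of $V(T)$ inducing it and hence finitely many targets up to isomorphism. The containment requires checking that if $\phi: V(F) \to V(F')$ is a local isomorphism with $F = T_1 \cup \cdots \cup T_\ell$ (copies of $T$ glued along $R_T$), then $F'$ decomposes as $\ell$ copies of some fixed $F'' \in \c{L}(T)$ glued along $\phi(R_T)$; here one must be slightly careful that $\phi$ restricted to different copies $T_j$ could a priori give different targets, but since all copies are identical as rooted graphs and $\phi$ is a single fixed map, one can pass to a subfamily or argue that the distinct targets are still finitely many and covered by the union. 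Once this is pinned down, the rest is an immediate chaining of the three cited propositions with no further estimates.
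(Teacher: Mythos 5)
Your overall strategy (chain Propositions~\ref{prop:treeDensity} and \ref{prop:randPolyGen} through a structural claim about $\c{L}(\c{T}^\ell)$) is the paper's, but the structural claim you rely on, namely $\c{L}(\c{T}^\ell)\sub \bigcup_{(F_*,R_*)\in\c{L}(T)}\c{F}_*^{\ell}$, is false as stated, and this is precisely the point you defer to the end. If $\phi$ is a local isomorphism defined on $T_1\cup\cdots\cup T_\ell$, its restrictions to the copies $T_i$ are indeed local isomorphisms onto the induced images $F_i=F[\phi(V(T_i))]$, and these images do agree on $\phi(R_T)$; but nothing forces the $F_i$ to be pairwise isomorphic rooted graphs, since $\phi$ may collapse vertices of distinct copies with one another in different ways. (For instance, with $T$ the path on $r,a,b$ rooted at $\{r\}$ and $\ell=2$, one may set $\phi(b_1)=\phi(a_2)$ while keeping $b_2$ separate: this respects conditions (a)--(c), yet the induced image of $T_1$ becomes a triangle while that of $T_2$ remains a path.) So $F$ need not be a union of $\ell$ copies of a single member of $\c{L}(T)$, and your fallback that ``the distinct targets are still finitely many and covered by the union'' does not rescue the set containment: a union of copies of several non-isomorphic targets is not, on the face of it, a member of any single $\c{F}_i^{\ell}$.

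The repair --- which is what the paper does, and which you gesture at with ``pass to a subfamily'' without carrying it out --- is a pigeonhole step that costs a constant factor in $\ell$. Let $c$ be the number of rooted graphs in $\c{L}(T)$ up to isomorphism (finite, as you correctly argue) and set $\ell_*=\floor{\ell/c}$. Among the $\ell$ images $F_1,\ldots,F_\ell$, at least $\ell_*$ are isomorphic as rooted graphs to a common $(F_*,R_*)\in\c{L}(T)$, and their union is an element of $\c{F}_*^{\ell_*}$ contained in $F$ as a subgraph. Subgraph containment is all the Tur\'an argument needs: every $\bigcup_{F_*\in\c{L}(T)}\c{F}_*^{\ell_*}$-free graph is then $\c{L}(\c{T}^\ell)$-free, so Proposition~\ref{prop:treeDensity} followed by Proposition~\ref{prop:randPolyGen} applied with exponent $\ell_*$ (rather than $\ell$) gives the stated bound, with $\ell_0$ enlarged by the factor $c$. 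Once you replace your containment by this ``contains a member of $\c{F}_*^{\ell_*}$'' claim, the rest of your chaining goes through and coincides with the paper's proof.
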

For this proof and throughout the paper, given a graph $G$ and a set of vertices $S$, we define $G[S]$ to be the induced subgraph of $G$ with vertex set $S$.
\begin{proof}
	Let $c$ denote the number of rooted graphs that are in $\c{L}(T)$ up to isomorphism, and observe that $c<\infty$ because local isomorphisms are in particular surjective homomorphisms.  Let $\ell_*:=\floor{\ell/c}$, noting that we can take $\ell_*$ to be arbitrarily large by taking $\ell$ to be arbitrarily large.
	
	We claim that for every $F\in \c{L}(\c{T}^\ell)$, there exists some rooted graph $F_*\in \c{L}(T)$ such that $F$ contains an element of $\c{F}_*^{\ell_*}$ as a subgraph.  Assuming this claim, we see that for a graph to be $\c{L}(\c{T}^\ell)$-free, it suffices for it to be $\bigcup_{F_*\in  \c{L}(T)}\c{F}_*^{\ell_*}$-free.  Note that $\rho(F_*)\ge \f{b}{a}$ for all $F_*\in  \c{L}(T)$ by Proposition~\ref{prop:treeDensity}, so by Proposition~\ref{prop:randPolyGen} we have for $\ell$ sufficiently large that
	\[\ex(n,H,\c{L}(\c{T}^\ell))\ge \ex(n,H,\bigcup_{F_*\in \c{L}(T)} \c{F}_*^{\ell_*})=\Om\left(n^{v-\f{a}{b}\cdot e}\right).\]
	It remains to prove this claim.
	
	By definition of the set $\c{T}^\ell$, if $F\in \mathcal{L}(\mathcal{T}^\ell)$, then there exists a local isomorphism $\phi:V(\bigcup_{i=1}^{\ell} T_i)\to V(F)$ where $T_1,\ldots,T_\ell$ are $\ell$ distinct copies of $T$ which agree on $R_T$.  Define the induced subgraph $F_i=F[\phi(V(T_i))]$ for all $i$.  Note that the restriction $\phi:V(T_i)\to V(F_i)$ is a local isomorphism from $(T_i,R_T)$ to $(F_i,\phi(R))$ because $\phi$ is a local isomorphism.  By the pigeonhole principle and the definition of $\ell_*$, there exists some $(F_*,R_*)\in \c{L}(T)$ such that at least $\ell_*$ different integers $i$ have $(F_i,\phi(R))$ isomorphic to $(F_*,R_*)$. Let $I$ denote the set of these $i$.  By definition, $\bigcup_{i\in I} F_i$ is an element of $\c{F}_*^{\ell_*}$ and also a subgraph of $F$, so we conclude the claim and hence the result.
\end{proof}

We can now prove our main result.
\begin{proof}[Proof of Theorem~\ref{thm:mainGeneralized}]
	Let $A$ denote the event that $\c{N}_H(G_{n,p})\ge \half \E[\c{N}_H(G_{n,p})]=\Theta(p^e n^v)$ and that $G$ has maximum degree at most $2pn$. Using, for example, a result of Janson~\cite{janson1990poisson} and the Chernoff bound, one can easily show $A$ holds a.a.s.\ provided $p^{m(H)}n\to \infty$ and $pn\log n\to \infty$.  Conditional on $A$ occurring, we can apply  Proposition~\ref{prop:relTuran} with some appropriate $m=\Theta(pn)$ together with Lemma \ref{lem:localIsoTuran} to conclude that, for $\ell$ sufficiently large,
	\[\ex(G_{n,p},H,\c{F})\ge \half \ex(m,H,\c{L}(\c{F})) m^{-v}\cdot \c{N}_H(G_{n,p})=\Om\left(m^{-\f{a}{b}\cdot e}\cdot p^e n^v\right)=\Om\left(p^{e-\f{a}{b}\cdot e}n^{v-\f{a}{b}\cdot e}\right),\]
	giving the result.
\end{proof}
It remains to prove Propositions~\ref{prop:randPolyGen}, \ref{prop:relTuran}, and \ref{prop:treeDensity}; which we do in the following two sections.
\section{Proof of Proposition~\ref{prop:randPolyGen}}\label{sec:randPoly}
We again emphasize that our proof of Proposition~\ref{prop:randPolyGen} is nearly identical to that of Ma, Yuan, and Zhang~\cite{ma2018some} and of Bukh and Conlon~\cite{bukh2018rational}.
%Our proof of Proposition~\ref{prop:randPolyGen} is nearly identical to that of Ma, Yuan, and Zhang~\cite{ma2018some}, which itself is based heavily off of the approach of Bukh and Conlon~\cite{bukh2018rational}; but we include the details for the sake of completeness.  %In \cite{ma2018some}, random polynomials were used to prove lower bounds on generalized Tur\'an problems for hypergraphs.  Motivated by this, we prove Proposition~\ref{prop:randPolyGen} in the more general setting of uniform hypergraphs, or graphs for short, though we emphasize that only the $k=2$ case is needed to prove our main results.
We begin with a simple combinatorial observation.

\begin{lem}\label{lem:edgeBound}
	For every rooted graph $(F,R)$ and integer $\ell\ge 1$, we have that every $K\in \c{F}^\ell$ satisfies
	\[e(K)\ge \rho(F)(v(K)-|R|)\]
\end{lem}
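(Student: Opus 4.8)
The plan is to add the $\ell$ copies of $F$ one at a time, bounding the number of genuinely new edges each one contributes by means of the definition of $\rho(F)$.

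Write $K=F_1\cup\cdots\cup F_\ell$, where $F_1,\dots,F_\ell$ are copies of $F$ that agree on $R$, and set $W=V(K)\sm R$, so that $|W|=v(K)-|R|$. For each $i$ let $S_i=V(F_i)\sm\big(R\cup V(F_1)\cup\cdots\cup V(F_{i-1})\big)$ be the set of non-root vertices of $F_i$ not occurring in any earlier copy. I would first note that $S_1,\dots,S_\ell$ partition $W$: every vertex of $W$ lies in some $V(F_i)$, and it belongs to $S_i$ for the least such $i$.

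For each $i$, let $E_i$ be the set of edges of $F_i$ incident to a vertex of $S_i$. Transporting $S_i$ into $V(F)$ through the isomorphism $F_i\cong F$, the set $S_i$ is a subset of $V(F)\sm R$, so the definition of $\rho(F)$ gives $|E_i|=e_{S_i}(F_i)\ge \rho(F)\,|S_i|$ (this is trivial when $S_i=\emptyset$). The crucial point is that the $E_i$ are pairwise disjoint subsets of $E(K)$: if $i<j$ and $e\in E_j$, then $e$ is incident to a vertex of $S_j$, which by construction lies outside $V(F_i)$, so $e\notin E(F_i)$ and in particular $e\notin E_i$.

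Summing these bounds over all $\ell$ copies then yields
\[e(K)\ \ge\ \sum_{i=1}^{\ell}|E_i|\ \ge\ \rho(F)\sum_{i=1}^{\ell}|S_i|\ =\ \rho(F)\,|W|\ =\ \rho(F)\,(v(K)-|R|),\]
as desired. There is no real obstacle here; the only points needing care are checking that each $S_i$ is root-free (so that the rooted-density bound applies) and that the new-edge sets $E_i$ are genuinely disjoint, both of which are immediate from the construction.
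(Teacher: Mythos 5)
Your proof is correct and is essentially the paper's argument: the paper proceeds by induction on $\ell$, bounding at each step the edges incident to the new non-root vertices of the latest copy via $\rho(F)$, which is exactly your decomposition into the sets $S_i$ and $E_i$ unrolled into a direct sum instead of an induction. The disjointness and root-freeness checks you flag are the same points the inductive step relies on, so nothing is missing.
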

\begin{proof}
	We prove the result by induction on $\ell$.  If $\ell=1$ then $K=F$, so the claimed inequality is equivalent to having \[ \f{e(F)}{v(F)-|R|}\ge \rho(F):=\min_{S\sub V(F)\sm R}\f{e_S}{|S|},\] 
	where we recall that $e_S$ is the number of edges of $F$ incident to a vertex of $S$.  This inequality is trivially true, proving the base case.
	
	Assume now that we proven the result up to some value $\ell>1$, and let $K\in \c{F}^\ell$.  This means $K$ can be written as the union of $\ell$ labeled copies of $F$, say $F_1,\ldots,F_\ell$, such that each of these agree on the set of roots $R$.  Let $K':=\bigcup_{i<\ell} F_i$ and let $S$ be the set of vertices in $F_\ell$ which are not in $K'$, noting that necessarily $S\sub V(F)\sm R$.  By definition of $\rho(F)$, we must have $e_S\ge \rho(F)|S|$.  Using this and our inductive hypothesis gives
	\[e(K)= e(K')+e_S\ge \rho(F)(v(K')-|R|)+\rho(F)|S|=\rho(F)(v(K)-|R|),\]
	giving the result. 
\end{proof}

We now introduce some algebraic definitions and lemmas.  Let $\F_q$ denote the finite field with $q$ elements. We write $\F_q[\b{X}^1,\b{X}^{2}]$ with $\b{X}^j=(X^j_1,\ldots,X_b^j)$ to denote the set of polynomials with variables $X_i^j$ which have coefficients in $\F_q$.  Given such a polynomial $f$ and $x^1,x^2\in \F_q^b$, we define $f(x^1,x^2)\in \F_q$, by replacing each $X_i^j$ symbol in $f$ with $x_i^j\in \F_q$. 

We say that a polynomial $f\in \F_q[\b{X}^1,\b{X}^{2}]$ has degree at most $d$ in $\b{X}^j$ if each of its monomials with respect to $\b{X}^j$, say $(X^j_1)^{\al_1}\cdots (X_b^j)^{\al_b}$, satisfies $\sum_{i=1}^b \al_i\le d$.  We say that such a polynomial is \textit{pseudo-symmetric} if $f(x^1,x^2)=f(x^2,x^1)$ for all $x^1,x^2\in \F_q^{b}$.  Let $\c{P}_{d,2b}\sub  \F_q[\b{X}^1,\b{X}^{2}]$ denote the set of all pseudo-symmetric polynomials in $2b$ variables which have degree at most $d$ in $\b{X}^j$ for all $j$.  

\begin{lem}[\cite{ma2018some} Lemma 2.2]\label{lem:symmetric}
	Let $V\sub \F_q^b$ and $E\sub {V\choose 2}$ be such that ${|V|\choose 2},{|E|\choose 2}<q$.  If $|E|\le d$, and if $f$ is a random polynomial chosen uniformly from $\c{P}_{d,2b}$, then
	\[\Pr[f(x^1,x^2)=0\ \ \forall \{x^1,x^2\}\in E]=q^{-|E|}.\]
\end{lem}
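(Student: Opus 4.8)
The plan is to read the claimed value $q^{-|E|}$ as the statement that a certain linear evaluation map is surjective. First I would observe that $\c{P}_{d,2b}$ is a finite-dimensional vector space over $\F_q$, since being pseudo-symmetric and having degree at most $d$ in each block $\b{X}^j$ are both linear conditions on the coefficient vector, and that because $f$ is pseudo-symmetric the value $f(x^1,x^2)$ depends only on the unordered pair $\{x^1,x^2\}$; hence
\[\Phi\colon \c{P}_{d,2b}\longrightarrow \F_q^{E},\qquad \Phi(f)=\big(f(x^1,x^2)\big)_{\{x^1,x^2\}\in E},\]
is a well-defined $\F_q$-linear map. If $\Phi$ is surjective, then each of its fibers is a coset of $\ker\Phi$ and so has the same size, which means a uniformly random $f\in\c{P}_{d,2b}$ has $\Phi(f)$ uniform on $\F_q^{E}$; in particular $\Pr[\Phi(f)=0]=q^{-|E|}$, which is the assertion. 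So it suffices to prove that $\Phi$ is surjective, and for that it is enough to produce, for each fixed $e_0\in E$, a polynomial $g_{e_0}\in\c{P}_{d,2b}$ with $\Phi(g_{e_0})=\1_{e_0}$, the indicator vector of $e_0$, since these span $\F_q^{E}$.

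Constructing these $g_{e_0}$ under the per-block degree bound is the real content, and my plan is to route the construction through a single polynomial that is linear in each block. Using $\binom{|V|}{2}<q$, I would first choose a linear form $L(x)=\sum_{i=1}^b a_ix_i$ that is injective on $V$: for fixed distinct $w,w'\in V$ a uniformly random coefficient vector satisfies $L(w)=L(w')$ with probability exactly $1/q$, so a union bound over the $\binom{|V|}{2}$ pairs leaves a valid choice. For an edge $e=\{u,v\}$ put $\sigma_1^{e}=L(u)+L(v)$ and $\sigma_2^{e}=L(u)L(v)$; distinct edges give distinct $2$-element subsets $\{L(u),L(v)\}$ of $\F_q$ (here $L$ injective on $V$ is used), and such a subset is recovered from $(\sigma_1^e,\sigma_2^e)$ as the root set of $t^2-\sigma_1^e t+\sigma_2^e$, so the points $(\sigma_1^{e},\sigma_2^{e})\in\F_q^{2}$ are pairwise distinct. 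Then, using $\binom{|E|}{2}<q$, I would pick $\lambda,\mu\in\F_q$ so that the scalars $m_e:=\lambda\sigma_1^{e}+\mu\sigma_2^{e}$ are pairwise distinct — for distinct $e,e'$ the map $(\lambda,\mu)\mapsto\lambda(\sigma_1^e-\sigma_1^{e'})+\mu(\sigma_2^e-\sigma_2^{e'})$ is a nonzero linear functional, hence vanishes with probability $1/q$, so a union bound over the $\binom{|E|}{2}$ pairs suffices. (This is the only place $\binom{|E|}{2}<q$ is needed.)

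To finish, I would set $M(x^1,x^2)=\lambda\big(L(x^1)+L(x^2)\big)+\mu L(x^1)L(x^2)$, which is pseudo-symmetric, has degree $1$ in each block $\b{X}^j$, and satisfies $M(u,v)=m_{\{u,v\}}$ for every $\{u,v\}\in E$, and then take the one-variable Lagrange polynomial $P(y)=\prod_{e\in E\sm\{e_0\}}(y-m_e)/(m_{e_0}-m_e)\in\F_q[y]$, which is well defined since the $m_e$ are distinct and has degree $|E|-1\le d-1$. Putting $g_{e_0}=P(M)$, I would check that it is pseudo-symmetric, that it has degree at most $\deg P=|E|-1\le d$ in each $\b{X}^j$ because $M$ has degree $1$ in each block, so $g_{e_0}\in\c{P}_{d,2b}$, and that $g_{e_0}(u,v)=P(m_{\{u,v\}})$ equals $1$ when $\{u,v\}=e_0$ and $0$ otherwise, i.e.\ $\Phi(g_{e_0})=\1_{e_0}$. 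I expect the only genuinely delicate point to be this construction of the $g_{e_0}$ subject to the per-block degree bound; the device that makes it work is the reduction to the block-linear polynomial $M$, after which an ordinary interpolation polynomial of degree $|E|-1\le d-1$ does the job, with the hypotheses $\binom{|V|}{2}<q$ and $\binom{|E|}{2}<q$ providing exactly the room needed to build $L$ and to choose $(\lambda,\mu)$.
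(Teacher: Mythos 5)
Your proof is correct: the paper itself states this lemma without proof, citing Ma, Yuan, and Zhang, and your argument is essentially the standard one from that source (and from Bukh--Conlon's unordered-pair variants) --- surjectivity of the linear evaluation map, established by building indicator polynomials from a linear form $L$ injective on $V$, a generic combination $\lambda\sigma_1+\mu\sigma_2$ separating the edges, and Lagrange interpolation in the block-linear polynomial $M$, with the hypotheses $\binom{|V|}{2}<q$, $\binom{|E|}{2}<q$, and $|E|\le d$ used exactly where they are needed. Nothing further is required.
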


We next need some definitions from algebraic geometry.  If $\ol{\F}_q$ denotes the algebraic closure of $\F_q$, then a \textit{variety} over $\ol{\F}_q$ is any set of the form $X=\{x\in \ol{\F}_q^b:f_1(x)=\cdots=f_a(x)=0\}$ where $f_1,\ldots,f_a:\ol{\F}_q^b\to \F_q$ are polynomials.  We say that $X$ is defined over $\F_q$ if the coefficients of its polynomials are in $\F_q$, and in this case we write $X(\F_q)=X\cap \F_q^b$. The variety $X$ is said to have complexity at most $M$ if $a,b$ and the degrees of the polynomials $f_k$ are bounded by $M$.  One can prove the following using standard results from algebraic geometry.

\begin{lem}[\cite{bukh2018rational} Lemma 2.7]\label{lem:varieties}
	Let $X,D$ be varieties over $\ol{\F}_q$ of complexity at most $M$ which are defined over $\F_q$.  If $q$ is sufficiently large in terms of $M$, then either $|X(\F_q)\sm D(\F_q)|\ge q/2$ or $|X(\F_q)\sm D(\F_q)|\le c$ for some $c$ depending only on $M$.
\end{lem}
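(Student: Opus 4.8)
\textbf{Proof plan for Proposition~\ref{prop:treeDensity}.}

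The plan is to work with the fractional relaxation of the rooted density. First I would observe that, since $\rho(T,R_T)$ is a minimum over subsets $S\sub V(T)\sm R_T$ of $e_S/|S|$, an equivalent way to package the lower bound $\rho(T,R_T)\ge c$ is: for every weighting, or more usefully, one can phrase $\rho(T,R_T)$ via an LP. Concretely, I claim that $\rho(T,R_T)\le \rho(F,R_F)$ is equivalent to showing that for the set $S^*\sub V(F)\sm R_F$ achieving $\rho(F,R_F)$, one can produce a set $S\sub V(T)\sm R_T$ (or a fractional analogue) with $e_S(T)/|S|\le e_{S^*}(F)/|S^*|$. Given a local isomorphism $\phi:V(T)\to V(F)$, the natural candidate is the preimage $S=\phi^{-1}(S^*)$. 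Since $\phi(R_T)\sub R_F$, we have $S\cap R_T=\emptyset$, so $S$ is an admissible test set for $\rho(T,R_T)$, and it remains to compare $e_S(T)$ with $e_{S^*}(F)$ and $|S|$ with $|S^*|$.

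The key point is the interaction between the forest structure of $T$ and conditions (b) and (c) of a local isomorphism. Condition (b) says $\phi$ maps edges to edges, so every edge of $T$ incident to $S=\phi^{-1}(S^*)$ maps to an edge of $F$ incident to $S^*$; condition (c) says that distinct edges of $T$ sharing a vertex map to distinct edges of $F$. I would then try to build, for each vertex $v\in S$, a charging argument: the edges of $T$ incident to $v$ inject (via $\phi$, using (c)) into the edges of $F$ incident to $\phi(v)\in S^*$. But a naive sum overcounts, because an edge of $F$ incident to $S^*$ may be the $\phi$-image of several edges of $T$ incident to $S$, and a vertex of $S^*$ may have many preimages in $S$. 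Here is where the forest hypothesis must be used: in a forest, the number of edges incident to a set $S$ is $\sum_{v\in S}\deg(v)$ minus the number of edges inside $S$, and the subforest structure controls the latter. I expect the cleanest route is to set up the comparison componentwise or to induct on $|V(F)|$, collapsing a suitable fiber of $\phi$ at a time: pick a vertex $u\in V(F)$ with more than one preimage (or an edge where (c)'s injectivity is not tight), identify two preimages in $T$, check this identification keeps $T$ a forest or reduces it in a controlled way, and verify that $\rho$ does not decrease under this elementary move. Iterating reduces to the case where $\phi$ is an isomorphism, where the inequality is trivial.

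The main obstacle I anticipate is precisely making the elementary identification move well-defined and monotone for $\rho$: when we merge two vertices $x,y\in V(T)$ with $\phi(x)=\phi(y)$, the graph $T/(x\sim y)$ need not be a forest (a path between $x$ and $y$ in $T$ becomes a cycle), so Proposition~\ref{prop:treeDensity} as stated does not directly apply to the intermediate object, and one must instead track the quantity $\min_S e_S/|S|$ through each step and argue it is non-decreasing using only that we started from a forest. I would handle this by proving a self-contained lemma: if $T$ is a forest, $x,y$ are non-root vertices, and $T'$ is obtained by identifying $x$ and $y$ (and deleting resulting parallel edges or loops as dictated by conditions (b),(c) of $\phi$), then $\rho(T)\le\rho(T')$; the forest hypothesis enters because the path $P$ joining $x$ to $y$ in $T$ (if any) has one fewer vertex than edges, which is exactly the slack needed to absorb the vertex lost in the identification. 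Summing these slacks across all identifications carried out by $\phi$ gives the inequality. An alternative, possibly slicker, approach would avoid induction entirely: define a fractional vertex weight $w:V(T)\to[0,1]$ by $w(v)=|\phi^{-1}(\phi(v))|^{-1}$, so that $\sum_{v\in\phi^{-1}(u)}w(v)=1$ for each $u\in V(F)$, and show directly that $e_{S^*}(F)\ge \rho(T)\cdot\sum_{v\in\phi^{-1}(S^*)}w(v)=\rho(T)|S^*|$ using condition (c) to lower-bound the number of $F$-edges at each $u\in S^*$ by the weighted count of $T$-edges; the forest hypothesis would then be used to pass from the fractional LP value back to $\rho(T)$, i.e.\ to show the LP with weights in $[0,1]$ has the same optimum as the integral one when $T$ is a forest. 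I would pursue whichever of these two formulations makes the forest input most transparent, and I suspect the fractional one is cleanest provided the LP-integrality claim for forests can be established quickly.
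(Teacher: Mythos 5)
Your proposal does not address the statement in question. The statement is Lemma~\ref{lem:varieties}, an algebraic--geometric dichotomy: for varieties $X,D$ over $\ol{\F}_q$ of complexity at most $M$ defined over $\F_q$, the set $X(\F_q)\sm D(\F_q)$ either has at least $q/2$ points or at most $c(M)$ points. What you have written is a plan for Proposition~\ref{prop:treeDensity} (monotonicity of rooted density under local isomorphisms of forests), which is a different result proved elsewhere in the paper. Nothing in your plan --- preimages of the optimal set $S^*$, vertex identifications, fractional relaxations of $\min_S e_S/|S|$ --- bears on counting $\F_q$-points of varieties of bounded complexity. Note also that the paper itself does not prove Lemma~\ref{lem:varieties}; it is quoted from Bukh and Conlon (Lemma~2.7 of their paper), where it follows from standard facts such as Lang--Weil type estimates: a variety of complexity at most $M$ decomposes into boundedly many irreducible components of bounded degree, and each component defined over $\F_q$ is either zero-dimensional (contributing $O_M(1)$ points) or positive-dimensional (contributing at least $q-O_M(\sqrt{q})\ge q/2$ points once $q$ is large), and removing $D(\F_q)$ only deletes $O_M(1)$ points from any component not contained in $D$. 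Any genuine proof attempt would have to run along these lines, not along the combinatorial lines you propose.

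For what it is worth, even read as a plan for Proposition~\ref{prop:treeDensity}, your sketch diverges from the paper's argument and has a gap at its core: taking $S=\phi^{-1}(S^*)$ is generally the wrong test set, since $|\phi^{-1}(S^*)|$ can exceed $|S^*|$ and the ratio $e_S(T)/|S|$ need not compare correctly; and the contraction move you describe is, as you yourself note, not forest-preserving. The paper avoids both problems by proving (Lemma~\ref{lem:tech}) that inside $\phi^{-1}(X)$ one can select a \emph{good} set $V$ mapped bijectively onto $X$ on which the induced edge map $\phi^*$ is injective, which immediately gives $e_V(T)\le e_X(F)$ with $|V|=|X|$.
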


We can now prove our main result for this section.

\begin{proof}[Proof of Proposition~\ref{prop:randPolyGen}]
	Recall that we wish to find an $\bigcup \c{F}_i^\ell$-free graph which has many copies of a graph $H$ that has $v$ vertices and $e$ edges.  Define \[s=1+ae+b\max_i(|R_i|-1),\hspace{.8em} d=s\max_i e(F_i),\hspace{.8em} N=q^b,\] where $q$ will be any prime power which is sufficiently large in terms of $H$, the pairs $(F_i,R_i)$, and the $\ell_0$ which we choose later on.  Let $f_1,\ldots,f_a\in \c{P}_{d,2b}$ be chosen independently and uniformly at random.  Let $G$ be the (random) graph with vertex set $\F_q^b$ where distinct vertices $x^1,x^2\in \F_q^b$ form an edge of $G$ if and only if $f_k(x^1,x^2)=0$ for all $k=1,\ldots,a$.  Note that the $f_k$ being pseudo-symmetric implies that the set of edges is well defined. 
	
	By Lemma~\ref{lem:symmetric}, for $q$ sufficiently large, the probability that a given set of $v$ vertices of $G$ forms a copy of $H$ is at least $q^{-ae}$.  Thus  \begin{equation}\E[\c{N}_H(G)]\ge q^{-ae} {N\choose v}=\Om(N^{v-\f{a}{b} e}).\label{eq:NHG}\end{equation}
	
	Consider some $(F_i,R_i)$, and let $u_1,\ldots,u_r$ denote its set of roots.  Fix vertices $x^1,\ldots,x^r\in \F_q^b$, and let $C$ be the collection of copies of $F_i$ in $G$ such that $x^j$ corresponds to $u_j$ for all $j$.  Our goal will be to estimate the $s$th moment of $|C|$.  To aid slightly with our proof, we let $\tilde{G}$ denote the complete graph with vertex set $V(G)=\F_q^b$.
	
	By definition, $|C|^s$ counts the number of ordered collections of $s$ (possibly overlapping, and possibly identical) copies of $F_i$ in $G$ such that $x^j\in V(G)$ corresponds to $u_j$ for all $j$.  Given a graph $K$, define $\chi_s(K)$ to be the number of such ordered collections of $s$ copies of $F_i$ in $\tilde{G}$ such that the union of the elements in the collections is isomorphic to $K$.  By definition, $\chi_s(K)=0$ for all $K\notin \bigcup_{\ell=1}^s \c{F}_i^\ell:=\c{F}_i^{\le s}$, and for all other $K$ we have \[\chi_s(K)=O_s(N^{v(K)-|R_i|})=O_s(q^{b(v(K)-|R_i|)}).\]  Since $e(K)\le d$ for all $K\in \c{F}_i^{\le s}$ by definition of $d$, we have by Lemma~\ref{lem:symmetric} for $q$ sufficiently large that the probability a given copy of $K$ lies in $G$ is exactly $q^{-a\cdot e(K)}$.  Putting all this together, we find
	\begin{equation}\E[|C|^s]=\sum_{K\in \c{F}_i^{\le s}} \chi_s(K) q^{-a\cdot e(K)}=\sum_{K\in \c{F}_i^{\le s}} O_s\l(q^{b(v(K)-|R_i|)-a\cdot e(K)}\r)=O_s(1),\label{eq:C}\end{equation}
	where this last step used Lemma~\ref{lem:edgeBound} together with the assumption $\rho(F_i)\ge \f{b}{a}$ and that $K\in \c{F}_i^\ell$ for some $\ell\le i$.

	Write $V(F_i)=\{u_1,\ldots,u_{r+p}\}$, and let $X$ be the algebraic variety consisting of all the tuples $(x^{r+1},\ldots,x^{r+p})\in \ol{\F}_q^{pb}$ such that for all $k=1,\ldots,a$, we have $f_k(x^{j_1},x^{j_2})=0$ whenever $u_{j_1}u_{j_2}\in E(F_i)$.  Observe that $C\sub X(\F_q)$, but the two sets are not equal due to the presence of degenerate copies of $F_i$ counted by $X$.  Namely, for all $j\ne j'$, define
	\[D_{j,j'}=X\cap \{(x^{r+1},\ldots,x^{r+p}):x^j=x^{j'}\},\]
	and set $D=\bigcup_{j\ne j'} D_{j,j'}$.  With this we have $C=X(\F_q)\sm D(\F_q)$.  Because $D$ is the union of varieties of bounded complexity, it too has bounded complexity.  Thus by Lemma~\ref{lem:varieties}, there exists some constant $c_i$ depending only on $F_i$ such that either $|C|\le c_i$ or $|C|\ge q/2$.  Using this together with  Markov's inequality and \eqref{eq:C}, we find 
	\[\Pr[|C|> c_i]=\Pr[|C|\ge q/2]=\Pr[|C|^s\ge (q/2)^s]\le \f{\E[|C|^s]}{(q/2)^s}=O_s(q^{-s}).\]
	Call a sequence $(x^1,\ldots,x^r)$\textit{ $i$-bad} if there are more than $c_i$ copies of $F_i$ in $G$ such that $x^j$ corresponds to $u_j\in R$ for all $1\le j\le r$.  Let $B_i$ denote the number of $i$-bad sequences.  Our analysis above gives
	\begin{equation}\E[B_i]\le N^r\cdot O_s(q^{-s})=O_s(N^{r-s/b})=o(N^{1-\f{a}{b}e}),\label{eq:Bi}\end{equation}
	where this last step used $s\ge 1+ae+b(r-1)$.
	
	Now let $G'\sub G$ be defined by deleting a vertex from each $i$-bad sequence for all $i$.  Because each vertex is in at most $vN^{v-1}$ copies of $H$ in $G$, by \eqref{eq:NHG} and \eqref{eq:Bi} we find
	\[\E[\c{N}_H(G')]\ge \E[\c{N}_H(G)]-\E\left[\sum_i B_i\right]\cdot vN^{v-1}=\Om(N^{v-\f{a}{b}e}).\]
	By definition $G'$ contains no element of $\bigcup \c{F}_i^{c_i}$, so for $\ell\ge \ell_0:=\max c_i$, we have shown that there exists a graph $G'$ on at most $N$ vertices such that it contains at least $\Om(N^{v-\f{a}{b}e})$ copies of $H$ and contains no element of $\bigcup \c{F}_i^{\ell}$.  This gives the desired lower bound on $\ex(n,H,\bigcup \c{F}_i^{\ell})$ when $n=q^b$ and $q$ is a sufficiently large prime power.  Using Bertrand's postulate gives the desired bound for all $n$.
\end{proof}

\section{Proof of Propositions~\ref{prop:relTuran} and \ref{prop:treeDensity}}
In this section we prove our results regarding local isomorphisms, which we recall are surjective homomorphisms  $\phi:V(F)\to V(F')$ between rooted graphs with $\phi(R)\sub R'$ such that $e\cap f\ne \emptyset$ implies $\phi(e)\ne \phi(f)$ whenever $e,f$ are distinct edges of $F$.  We also recall that $\c{L}(\c{F})$ consists of all of the graphs $F'$ for which there exists a graph $F\in \c{F}$ which has a local isomorphism to $F'$.

We begin by proving our ``transference'' result, which we recall says that if $G,H$ are graphs and $m$ is an integer with $m\ge 4|E(H)| \Del$, then there exists an $\c{F}$-free subgraph $G'\sub G$ with
\[\c{N}_H(G')\ge \half \ex(m,H, \c{L}(\c{F})) m^{-v(H)}\cdot \c{N}_H(G).\]

\begin{proof}[Proof of Proposition~\ref{prop:relTuran}]
	Let $M$ be an $m$-vertex graph with $\c{N}_H(M)=\ex(m, H,\c{L}(\c{F}))$ and which is $\c{L}(\c{F})$-free (which exists by definition), and let $\phi:V(G)\to V(M)$ be chosen uniformly at random amongst all possible maps from $V(G)$ to $V(M)$.  Let $G'\sub G$ be the subgraph obtained by keeping all of the edges $e\in E(G)$ which satisfy
	\begin{itemize}
		\item[$(1)$] $\phi(e)\in E(M)$, and
		\item[$(2)$] $\phi(e)\ne \phi(f)$ for any other $f\in E(G)$ with $e\cap f\ne \emptyset$.  
	\end{itemize}
	We claim that $G'$ is $\c{F}$-free.  Indeed, assume $G'$ contained a subgraph $F$ isomorphic to some element of $\c{F}$.  Let $F'$ be the subgraph of $M$ with $V(F')=\{\phi(u):u\in V(F)\}$ and $E(F')=\{\phi(e):e\in E(F)\}$.  Note that $F\sub G'$ implies that each edge of $F$ satisfies (1), so every element of $E(F')$ is an edge in $M$, i.e.\ $F'$ is indeed a subgraph of $M$.  By conditions (1) and (2), $\phi$ is a local isomorphism from $F$ to $F'$, so $F'\in \c{L}(\c{F})$, a contradiction to $M$ being $\c{L}(\c{F})$-free.
	
	It remains to compute $\E[\c{N}_H(G')]$.  Fix a subgraph $K\sub G$ which is isomorphic to $H$.  Let $A$ denote the event that $\phi$ maps $V(K)$ isomorphically onto a copy of $H$ in $M$ (which in particular means (1) is satisfied for each $e\in E(K)$), and let $B$ denote the event that (2) is satisfied for every $e\in E(K)$.  It is not difficult\footnote{Somewhat more formally, by definition there exists an isomorphism $\psi:V(K)\to V(H)$. If $L\sub M$ is a copy of $H$, then by definition there exists an isomorphism $\psi_L:V(L)\to V(H)$. Each of the $\c{N}_H(M)$ maps $\psi_L^{-1}\circ \psi:V(K)\to V(L)\sub V(M)$ are distinct, and $\phi$ satisfies $A$ if and only if $\phi$ restricted to $V(K)$ is one of these maps.} to see that  $\Pr[A]= \c{N}_H(M) m^{-v(H)}$.  To bound $\Pr[B|A]$, we claim that if $|e\cap f|=1$, then 
	\begin{equation}
		\Pr[\phi(e)= \phi(f)|A]\le \f{1}{m}.\label{eq:B}
	\end{equation}
	Indeed, if $f\sub V(K)$, then the event $\phi(e)= \phi(f)$ can never occur given $A$ (since in particular, $A$ implies that $\phi$ restricted to $V(K)$ is a bijection), and otherwise we have $\Pr[\phi(e)= \phi(f)|A]= \f{1}{m}$.  Note that $B$ occurs provided each $e\in E(K)$ has $\phi(e)\ne \phi(f)$ for all $f$ with $|e\cap f|=1$.   Thus a union bound together with \eqref{eq:B} gives
	\begin{align*}\Pr[B|A]&\ge1- |E(H)|\cdot 2\Del\cdot (1/m)\ge \half. \end{align*}
	
	In total we find
	\[\Pr[E(K)\sub E(G')]\ge \Pr[A\cap B]=\Pr[A]\cdot \Pr[B|A]\ge  \c{N}_H(M) m^{-v(H)}\cdot \half,\]
	and linearity of expectation gives \[\E[\c{N}_H(G')]\ge \half  \c{N}_H(M) m^{-v(H)}\cdot \c{N}_H(G)=\half \ex(m,H,\c{L}(\c{F}))m^{-v(H)}\cdot \c{N}_H(G).\]  Thus there exists some subgraph $G'\sub G$ with this many copies of $H$ which is $\c{F}$-free, giving the result
\end{proof}

The remainder of the section is dedicated to proving  Proposition~\ref{prop:treeDensity}, which we recall says that if $T$ is a rooted forest and $\phi:V(T)\to V(F)$ is a local isomorphism, then $\rho(T)\le \rho(F)$.  Having $\rho(T)\le \rho(F)$ is equivalent to saying that for any $X\sub V(F)\sm R_F$, there exists a set $V\sub V(T)\sm R_T$ such that $\f{e_V(T)}{|V|}\le \f{e_X(F)}{|X|}$.  To prove this, we will show that for each $X\sub V(F)\sm R_F$, there exists some $V\sub V(T)\sm R_T$ such that $\phi$ maps $V$ bijecitvely onto $X$ and such that $e_V(T)\le e_X(F)$.

With this goal in mind, we define $E_S(G)$ to be the set of edges of a graph $G$ which contain a vertex of $S$, and we denote this simply be $E_S$ whenever $G$ is understood.  Note that $|E_S|=e_S$.  Given a local isomorphism $\phi:V(T)\to V(F)$, we define the induced map $\phi^*:E(T)\to E(F)$ by mapping $uv\in E(T)$ to $\phi(u)\phi(v)\in E(F)$.  We say that a set of vertices $V\sub V(T)\sm R_T$ is \textit{good} (with respect to $\phi$) if $\phi$ maps $V$ bijectively onto $V(F)$ and if $\phi^*$ restricted to $E_V$ is an injection.

We first show that good sets exist provided $T$ is a forest without roots.
\begin{lem}\label{lem:tech}
	If $T$ is a forest without roots and 	$\phi:V(T)\to V(F)$ is a local isomorphism, then there exists a good set $V$ with respect to $\phi$.
\end{lem}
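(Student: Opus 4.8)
The plan is to first turn ``$V$ is good'' into a purely local condition on the chosen representatives, and then to induct on $|V(T)|$ by peeling a leaf off of $T$.

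First I would record the following reduction. Call a set $V$ a \emph{transversal} of $\phi$ if it contains exactly one vertex $v_x$ from each fiber $\phi^{-1}(x)$, $x\in V(F)$. For any transversal $V$, the map $\phi^*$ is automatically injective on the edge set of $T[V]$: two distinct edges of $T[V]$ with a common image $\{x,y\}$ would be vertex-disjoint by local-isomorphism condition (c), yet each would have to contain $v_x$. The same use of (c) shows that an edge of $T[V]$ and an edge of $\partial V$ (the edges of $T$ with exactly one endpoint in $V$) cannot share a $\phi^*$-image. Since $E_V$ is the disjoint union of $E(T[V])$ and $\partial V$, it follows that $V$ is good iff $\phi^*$ is injective on $\partial V$; and because (c) also forces each $v\in V$ to have at most one neighbour in any one fiber, this is in turn equivalent to: for every edge $\{x,y\}\in E(F)$ it is \emph{not} the case that $v_x$ has a neighbour in $\phi^{-1}(y)\setminus\{v_y\}$ while, simultaneously, $v_y$ has a neighbour in $\phi^{-1}(x)\setminus\{v_x\}$. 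Call a transversal satisfying this \emph{valid}; the goal is to produce a valid transversal.

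Now I would induct on $|V(T)|$. If $E(T)=\emptyset$ then $E(F)=\emptyset$, so every transversal is vacuously valid. Otherwise pick a leaf $u$ of $T$, let $u'$ be its unique neighbour, and set $x=\phi(u)$, $y=\phi(u')$ (so $x\ne y$). If $u$ is the only preimage of $x$, then no edge of $T-u$ maps to an edge at $x$, so $\phi$ restricts to a local isomorphism $V(T-u)\to V(F-x)$; taking a valid transversal there by induction and then adding $u$ as the representative of $x$ yields a valid transversal of $\phi$, since the only edge of $T$ meeting $\phi^{-1}(x)=\{u\}$ is $\{u,u'\}$, and for any edge $\{x,z\}\in E(F)$ the set $\phi^{-1}(x)\setminus\{v_x\}=\emptyset$, so no edge at $x$ can be violated. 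If instead $x$ has a second preimage, then $\phi$ restricts to a local isomorphism $V(T-u)\to V(F)$; let $V'$ be a valid transversal of it obtained by induction. If $V'$ is also valid for $\phi$ on all of $T$, we are done. If not, the reformulation pins the failure to the single edge $\{x,y\}$, and unwinding it shows $u'\in V'$ and $v'_x$ has a neighbour in $\phi^{-1}(y)\setminus\{u'\}$. In this case I would \emph{swap the representative of $x$ onto the leaf}: take $V=(V'\setminus\{v'_x\})\cup\{u\}$. Because $u$'s only neighbour is $u'$ and $u'$ is already the representative of $y$ in $V$, the new representative $u$ of $x$ has no neighbour in any $\phi^{-1}(z)\setminus\{v_z\}$, so no edge of $F$ incident to $x$ can be violated; and for edges of $F$ not incident to $x$ the representatives and the relevant neighbourhoods are unchanged from $V'$ (deleting or reinserting the leaf $u$ only alters the neighbourhood of $u'$, and only by a vertex of the fiber $\phi^{-1}(x)$, which is never the far endpoint in those edges). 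Hence $V$ is valid.

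I expect the only real work to lie in that last verification --- that pushing the representative of $x$ onto the leaf $u$ does not manufacture a fresh violation at some other edge $\{x,z\}\in E(F)$ or among the edges away from $x$. The mechanism that saves us is precisely that $u$ is a leaf and that in the bad case the representative of $\phi^{-1}(y)$ was \emph{forced} to be $u'$, so there is no room left for a conflict; but making this airtight requires careful bookkeeping of which edges enter and leave $\partial V$ under the swap. Keeping the two reductions consistent --- deleting $u$ together with $\phi(u)$ in the first case versus deleting only $u$ in the second, and checking each time that the restricted map is genuinely a local isomorphism --- is the other place to be careful.
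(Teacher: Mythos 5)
Your proof is correct and follows essentially the same route as the paper's: induction on $|V(T)|$ by peeling off a degree-one vertex $u$, splitting on whether $\phi(u)$ has another preimage, and in the latter case swapping the representative of the fiber of $\phi(u)$ onto $u$, with condition (c) ruling out collisions exactly as in the paper; your ``valid transversal'' reformulation merely repackages the injectivity check on $E_V$ as an edge-by-edge condition in $F$. One small slip: $E(T)=\emptyset$ does not force $E(F)=\emptyset$ (a local isomorphism need not hit every edge of $F$), but this is harmless since when $T$ has no edges every transversal is vacuously good.
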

\begin{proof}
	Assume for contradiction that this does not hold for some $T,F$, and choose such a pair with $v(T)$ as small as possible.  Note that the result trivially holds if $T=\emptyset$, so we can assume $T$ has some $u\in V(T)$, and we can choose this $u$ to have degree at most 1 since $T$ is a forest.  
	\begin{claim}
		The map $\phi$ restricted to $T-u$ is a local isomorphism.
	\end{claim}
	\begin{proof}
		The only way $\phi$ restricted to the smaller graph $T-u$ can fail to be a local isomorphism is if it is no longer surjective, i.e.\ if $\phi(w)\ne \phi(u)$ for any $w\ne u$.  In this case, $\phi$ restricted to the forest $T-u$ is a local isomorphism onto $F-\phi(u)$, and by assumption of $T$ being a minimal counterexample, there exists some good set $V'$ for this map.  We claim that $V=V'\cup \{u\}$ is a good set for $\phi:V(T)\to V(F)$.  
		
		By assumption $\phi$ maps $V'$ bijectively onto $V(F)\sm \{\phi(u)\}$, so $\phi$ also maps $V$ bijectively onto $V(F)$.  We also have that $\phi^*$ is injective on $E_{V'}$.  Note that $E_V=E_{V'}$ if $u$ has degree 0, in which case $\phi^*$ is injective on $E_V$, and otherwise $E_V=E_{V'}\cup \{uv\}$ where $v$ is the unique neighbor of $u$.  Thus if $\phi^*$ is not injective on $E_V$, we must have $\phi^*(xy)=\phi^*(uv)$ for some $xy\ne uv$.  This implies, say, $\phi(x)=\phi(u)$, which means $x=u$ since $\phi(w)\ne \phi(u)$ for any $w\ne u$.  This in turn means $y=v$ since $v$ is the unique neighbor of $u=x$, contradicting our assumption $xy\ne uv$.  We conclude that $\phi^*$ is injective on $E_V$, and hence $V$ is good.  This contradicts our choice of $T$, proving the claim.

		 %so if $\phi^*(wx)=\phi^*(yz)$ for some distinct $wx,yz\in E_V$, then without loss of generality we must have $w=u$.  This means that, say, $\phi(u)=\phi(y)$ and $\phi(x)=\phi(z)$, and this first condition implies $y=u$ by our hypothesis $\phi(v)\ne \phi(u)$ for $v\ne u$.  This means $\phi$ maps two distinct neighbors $x,z$ of $u$ to the same vertex, a contradiction to $\phi$ being a local isomorphism.  Thus $\phi^*$ is injective on $E_V$, and hence $V$ is good.  This contradicts our choice of $T$, proving the claim.
	\end{proof}
	
	By the assumption that $T$ is a minimal counterexample, there must exist a good set $V'$ with respect to the local isomorphism $\phi:V(T-u)\to V(F)$.  If $u$ is not adjacent to any vertex in $V'$, then we claim that $V'$ is a good set with respect to $\phi:V(T)\to V(F)$.  Indeed, by assumption $\phi$ maps $V'$ bijectively onto $V(F)$, and we have that $\phi^*$ is injective on $E_{V'}(T-u)=E_V(T)$.  This contradicts $T$ being a counterexample. 
	
	Thus there exists some $v\in V'$ which is adjacent to $u$, and we note that this must is the unique neighbor of $u$ since we assumed $u$ has degree at most 1.  Because $V'$ is mapped bijectively onto $V(F)$, there exists some $u'\in V'$ with $\phi(u')=\phi(u)$.  Define $V=V'\sm \{u'\}\cup \{u\}$.  We claim that this set is good.  It is clear that $\phi$ maps $V$ bijectively onto $V(F)$, so it remains to show that $\phi^*$ is injective on $E_V$.
	
	Observe that $E_{V}\sub E_{V'}\cup \{uv\}$, so if $\phi^*$ were not injective, then we must have $\phi^*(xy)=\phi^*(uv)$ for some $xy\ne uv$ in $E_V$.  Assume without loss of generality that $x\in V$.  If $\phi(x)=\phi(u)$ then $x=u$ (since $\phi$ is bijective on $V$) which implies $y=v$, contradicting $xy\ne uv$.  Thus $\phi(x)=\phi(v)$ and hence $x=v$.  This implies that $\phi^*$ maps two distinct edges $uv,vy$ with a vertex in common to the same edge of $F$, a contradiction to $\phi$ being a local isomorphism.  We conclude that $\phi^*$ is injective on $E_V$ and hence $V$ is good.  This contradicts our choice of $T$, proving the result.

	 %there existed two distinct edges of $E_V$ with $\phi^*(wx)=\phi^*(yz)$, say with $w,y\in V$.  If, say, $w=u$, then we must have $x=v$ (since this is the unique neighbor of $u$).  Having $y\in V$ and $\phi(y)\in \{\phi(u),\phi(v)\}$ forces $y\in \{u,v\}$ since $\phi$ is bijective on $V$.  We must have $y=v$, as and $y=u$ would force $z=v$, contradicting $wx\ne yz$.  This implies that $u,z$ are distinct neighbors of $v$ with $\phi(u)=\phi(z)$, a contradiction to $\phi$ being a local isomorphism.  Thus we can assume $u\notin \{w,x,y,z\}$, i.e. that $wx,yz\in E_{V\sm \{u\}}\subseteq E_{V'}$.  This contradicts the assumption that $\phi^*$ is injective on $E_{V'}$, so we conclude that $V$ is good.  This contradicts our choice of $T$, proving the result.
\end{proof}
We can now prove our main result.
\begin{proof}[Proof of Proposition~\ref{prop:treeDensity}]
	Let $X\sub V(F)\sm R_F$ be a set of vertices such that $\f{e_X}{|X|}=\rho(F)$, and let $\phi^{-1}(X)=\{u\in V(T):\phi(u)\in X\}$.  Note that $\phi^{-1}(X)$ contains no roots of $T$ because $X\sub V(F)\sm R_F$ and because $\phi$ is a local isomorphism.   Let $F'=F[X]$ and $T'=T[\phi^{-1}(X)]$, noting that $\phi:V(T')\to V(F')$ is still a local isomorphism and that $F',T'$ are unrooted graphs with $T'$ a forest.  By the previous lemma, there exists a set $V\sub \phi^{-1}(X)\sub V(T)\sm R_T$ which is good with respect to the map $\phi:V(T')\to V(F')$.
	
	\begin{claim}
		The map $\phi^*:E_V(T)\to E_X(F)$ is injective.
	\end{claim}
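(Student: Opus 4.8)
The plan is to prove injectivity by contradiction, splitting into the two cases where the offending edges meet or are disjoint. First I would record that $\phi^*$ really does send $E_V(T)$ into $E_X(F)$: if $e=uv$ has $u\in V$, then $\phi(u)\in X$ (because $V\sub\phi^{-1}(X)$) and $\phi(u)\phi(v)\in E(F)$ since $\phi$ is a homomorphism on all of $T$, so $\phi^*(e)\in E_X(F)$. Now suppose $e\ne e'$ in $E_V(T)$ with $\phi^*(e)=\phi^*(e')$, aiming for a contradiction.

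If $e$ and $e'$ share a vertex, then $\phi^*(e)=\phi^*(e')$ directly contradicts condition (c) in the definition of the local isomorphism $\phi\colon V(T)\to V(F)$, so this case is immediate. The substantive case is $e\cap e'=\emptyset$. Here I would write $e=uv$ and $e'=u'v'$, relabelling so that $u,u'\in V$ (each edge of $E_V$ has an endpoint in $V$). Since $V$ is good, $\phi$ is injective on $V$; hence $\phi(u)=\phi(u')$ would force $u=u'$, contradicting disjointness, so $\phi(u)=\phi(v')$ and therefore $\phi(v)=\phi(u')$. The same injectivity then rules out $v\in V$ (else $v=u'$) and $v'\in V$ (else $v'=u$). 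The crucial observation is now that $e$ and $e'$ are in fact edges of the induced subgraph $T'=T[\phi^{-1}(X)]$ lying in $E_V(T')$: the endpoints $u,u'$ lie in $V\sub\phi^{-1}(X)$, and $\phi(v)=\phi(u')\in X$, $\phi(v')=\phi(u)\in X$ put $v,v'$ in $\phi^{-1}(X)$ as well. But goodness of $V$ with respect to $\phi\colon V(T')\to V(F')$ says precisely that $\phi^*$ is injective on $E_V(T')$, contradicting $\phi^*(e)=\phi^*(e')$ with $e\ne e'$.

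The only delicate point is this last step: goodness only controls edges of the induced subgraph $T'$, whereas the claim concerns all edges of $T$ meeting $V$. The main obstacle is bridging that gap, and it is overcome exactly by the computation above: in the disjoint case both ``outer'' endpoints $v,v'$ are forced into $\phi^{-1}(X)$, so the two colliding edges lie in $T'$ after all, where goodness applies; the only other way a collision could arise, via intersecting edges, is handled for free by property (c) of the ambient local isomorphism.
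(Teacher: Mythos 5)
Your proof is correct and rests on exactly the same two ingredients as the paper's: condition (c) of the ambient local isomorphism $\phi:V(T)\to V(F)$ to kill collisions between intersecting edges, and goodness of $V$ (bijectivity of $\phi$ on $V$ plus injectivity of $\phi^*$ on $E_V(T')$) to kill the remaining case, with the only difference being that you split on whether the two edges intersect while the paper splits on whether all four endpoints lie in $\phi^{-1}(X)$. This is essentially the paper's argument, just with the complementary case decomposition.
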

	\begin{proof}
		Assume for contradiction that there were distinct edges with $\phi^*(wx)=\phi^*(yz)$, say with $\phi(w)=\phi(y)$ and $\phi(x)=\phi(z)$.  Note that we cannot have $w,x,y,z\in \phi^{-1}(X)=V(T')$, since $V$ being good implies that $\phi^*$ is injective on such edges.  Thus we can assume without loss of generality that $w\notin \phi^{-1}(X)$, and having $\phi(w)=\phi(y)$ forces $y\notin \phi^{-1}(X)$ as well.  Because $V\sub \phi^{-1}(X)$, we have $w,y\notin V$, so by definition of $wx,yz\in E_V(T)$ we must have $x,z\in V$.  Having $\phi(x)=\phi(z)$ implies $x=z$ since $\phi$ is bijective on $V$.  Thus $w,y$ are distinct neighbors of $x$ with $\phi(w)=\phi(y)$, a contradiction to $\phi$ being a local isomorphism.  This proves the claim.
	\end{proof}
	Note that $|E_V(T)|=e_V$ and $|E_X(F)|=e_X$. Because $|V|=|X|$ by definition of $V$ being good, the claim implies
	\[\rho(T)=\min_{S\sub V(T)\sm R_T}\f{e_{S}}{|S|}\le \f{e_V}{|V|}\le \f{e_X}{|X|}=\rho(F),\]
	giving the result.
\end{proof}
As an aside, it could be the case that $\rho(T)\le \rho(F)$ whenever there exists a local isomorphism $\phi:V(T)\to V(F)$ (regardless of whether $T$ is a forest or not).  However, in this more general setting Lemma~\ref{lem:tech} no longer holds, as can easily be seen by considering a local isomorphism from $C_8$ to $C_4$.

\section{Concluding Remarks}\label{sec:con}
\textbf{Tur\'an problems in random graphs.} Given a graph $F$, we define its \textit{2-density}
\[m_2(F)=\max\left\{\f{e(F')-1}{v(F')-2}:F'\sub F,\ e(F')\ge 2\right\}.\]
Given a family of graphs $\c{F}$, we define $m_2(\c{F})=\min_{F\in \c{F}} m_2(F)$.
A simple deletion argument gives the following extension of Theorem~\ref{thm:main}.  Here the notation $f(n)\gg g(n)$ means $f(n)/g(n)$ tends to infinity as $n$ tends towards infinity.
\begin{cor}\label{cor:deletion}
	Let $(T,R)$ be a rooted tree with $\rho(T)\ge \f{b}{a}$ for $a,b$ positive integers. Then there exists some $\ell_0$ depending only on $(T,R)$ such that, for all $\ell\ge \ell_0$, a.a.s.
	\[\ex(G_{n,p},\c{T}^\ell)=\begin{cases}
		\Omega\left(\max\left\{p^{1-\f{a}{b}}n^{2-\f{a}{b}},n^{2-\rec{m_2(\c{T}^\ell)}}\right\}\right) & p\ge n^{\frac{-1}{m_2(\c{T}^\ell)}},\\ 
		(1+o(1))p{n\choose 2} & n^{\frac{-1}{m_2(\c{T}^\ell)}}\gg p\gg n^{-2} .
	\end{cases}\]
\end{cor}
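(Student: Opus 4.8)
The plan is to handle the two ranges of $p$ separately, in each case reducing to a deletion argument applied either to $G_{n,p}$ itself or to a sparser coupled copy, and to import the bound $\Om(p^{1-a/b}n^{2-a/b})$ directly from Theorem~\ref{thm:main}. Write $\mu=m_2(\c{T}^\ell)$; since every member of $\c{T}^\ell$ with at least two edges has $2$-density at least $1$ we have $\mu\ge 1$, so all exponents appearing below are positive. In the range $p\ge n^{-1/\mu}$ the term $\Om(p^{1-a/b}n^{2-a/b})$ is exactly Theorem~\ref{thm:main} (its hypothesis $pn^2\to\infty$ holds, since $p\ge n^{-1/\mu}\ge n^{-1}$), and in the range $n^{-1/\mu}\gg p\gg n^{-2}$ the upper bound $\ex(G_{n,p},\c{T}^\ell)\le e(G_{n,p})=(1+o(1))p{n\choose 2}$ holds a.a.s.\ by the Chernoff bound (applicable since $p{n\choose 2}\to\infty$). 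So I am left to prove a matching lower bound for $p$ in the second range and to prove $\ex(G_{n,p},\c{T}^\ell)=\Om(n^{2-1/\mu})$ a.a.s.\ for $p$ in the first range.

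For the deletion argument the key point is not to delete an edge from each copy of each member of $\c{T}^\ell$: a member $F$ need not be $2$-balanced, so $\E[\c{N}_F(G_{n,p})]=\Theta(n^{v(F)}p^{e(F)})$ can exceed $pn^2$. Instead, for each of the finitely many $F\in\c{T}^\ell$ I would fix a subgraph $F'\sub F$ with $e(F')\ge 2$ achieving $\f{e(F')-1}{v(F')-2}=m_2(F)$; killing all copies of $F'$ kills all copies of $F$. Since $m_2(F)\ge\mu$, we get $p\ll n^{-1/\mu}\le n^{-(v(F')-2)/(e(F')-1)}$, and hence $\E[\c{N}_{F'}(G_{n,p})]=n^{v(F')}p^{e(F')}=(n^{v(F')-2}p^{e(F')-1})\cdot pn^2=o(pn^2)$. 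A union bound over the finitely many witnesses and Markov's inequality show that a.a.s.\ $G_{n,p}$ contains $o(pn^2)$ copies of these witnesses altogether; deleting one edge from each leaves a $\c{T}^\ell$-free subgraph with $(1-o(1))p{n\choose 2}$ edges, which together with the trivial upper bound gives the second case.

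For $\ex(G_{n,p},\c{T}^\ell)=\Om(n^{2-1/\mu})$ when $p\ge n^{-1/\mu}$, I would use monotonicity of $\ex(\,\cdot\,,\c{T}^\ell)$ in the host graph and the coupling $G_{n,q}\sub G_{n,p}$ for $q\le p$ to reduce to the case $p=q:=cn^{-1/\mu}$ for a small constant $c>0$, and run the same witness-deletion at $q$. Here $\E[\c{N}_{F'}(G_{n,q})]=\Theta(c^{e(F')}n^{v(F')-e(F')/\mu})$, and since $\mu\le m_2(F)=\f{e(F')-1}{v(F')-2}$ one gets $v(F')-e(F')/\mu\le 2-1/\mu$, with equality precisely when $m_2(F')=\mu$. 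Thus witnesses with $m_2(F')>\mu$ contribute only $o(qn^2)$ copies in expectation, handled by Markov; a witness $F'$ with $m_2(F')=\mu$ is $2$-balanced, so its expected count is $\Theta(c^{e(F')})\cdot\Theta(qn^2)$, and moreover $m(F')<m_2(F')=\mu$ (it is connected with at least two edges), so $q=cn^{-1/\mu}$ lies strictly above the threshold $n^{-1/m(F')}$ at which a copy of $F'$ appears and $\c{N}_{F'}(G_{n,q})$ concentrates about its mean by the standard second-moment estimate. Because $e(F')\ge 2$, taking $c$ small enough makes the total number of witness copies a.a.s.\ at most $\half e(G_{n,q})$, so deleting one edge per witness copy leaves a $\c{T}^\ell$-free subgraph with $\Om(qn^2)=\Om(n^{2-1/\mu})$ edges; monotonicity then lifts this to all $p\ge n^{-1/\mu}$. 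Since for each fixed $p(n)$ in the first range both lower bounds hold a.a.s., so does their maximum.

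I expect the main obstacle to be the $\Om(n^{2-1/\mu})$ bound in the large-$p$ range. The deletion argument there must be run at the threshold scale $q=\Theta(n^{-1/\mu})$, where the $2$-balanced witnesses have copy counts of the same order as $e(G_{n,q})$, so Markov's inequality alone only yields the conclusion with constant probability; upgrading to ``a.a.s.''\ forces one to observe that such $q$ is already strictly above the ordinary ($m$-density) appearance threshold of every relevant witness, so second-moment concentration is available, and to isolate exactly the $2$-balanced witnesses as the ones that matter. The small facts about $m_2$ used along the way — that $m_2$ is monotone under subgraphs, that the $m_2$-extremal subgraph can be taken connected and $2$-balanced, and that $m_2(F)>m(F)$ for connected $F$ with at least two edges — are all routine and I would dispatch them in a line or two.
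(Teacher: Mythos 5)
Your proposal is correct, and its overall skeleton matches the paper's: import the $\Om(p^{1-a/b}n^{2-a/b})$ term from Theorem~\ref{thm:main}, get the trivial upper bound from Chernoff, and in both remaining ranges delete one edge from each copy of a $2$-density-extremal witness $F'\sub F$ for each $F\in\c{T}^\ell$, running the deletion at scale $p\approx cn^{-1/m_2(\c{T}^\ell)}$ and lifting to larger $p$ by monotonicity; your treatment of the range $n^{-2}\ll p\ll n^{-1/m_2(\c{T}^\ell)}$ is essentially the paper's (Markov at a slowly shrinking threshold plus Chernoff). Where you genuinely diverge is the step you yourself flagged as the obstacle: upgrading the $\Om(n^{2-1/m_2(\c{T}^\ell)})$ bound to an a.a.s.\ statement for $p\ge n^{-1/m_2(\c{T}^\ell)}$. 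The paper runs the deletion argument only in expectation at $p\le cn^{-1/m_2(\c{T}^\ell)}$, transfers the expectation bound upward by coupling monotonicity, and then concentrates the random variable $\ex(G_{n,p},\c{T}^\ell)$ itself via Azuma's inequality, using that it is $1$-Lipschitz under edge flips and that its mean is $\gg\sqrt{\binom{n}{2}}$ because $m_2(\c{T}^\ell)>1$. You instead concentrate the witness counts at $q=cn^{-1/m_2(\c{T}^\ell)}$: non-$2$-balanced witnesses are handled by Markov, and $2$-balanced ones by a second-moment (Chebyshev) estimate, which is legitimate since $m(F')<m_2(F')$ puts $q$ strictly above the appearance threshold; stochastic domination then lifts the a.a.s.\ bound to all larger $p$. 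Both routes work. The paper's buys brevity — no structural facts about $m_2$-maximizers (connectedness, $2$-balancedness, $m<m_2$) and no variance computation are needed — at the price of invoking $m_2(\c{T}^\ell)>1$ and bounded-difference machinery; yours replaces Azuma by the classical subgraph-count second moment and does not need $m_2(\c{T}^\ell)>1$, but must carry the routine (if slightly fiddly) lemmas about extremal subgraphs for $m_2$ that you correctly identified and deferred.
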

\begin{proof}

	The bound $\Om(p^{1-\f{a}{b}}n^{2-\f{a}{b}})$ follows from Theorem~\ref{thm:main}. Trivially $\ex(G_{n,p},\c{T}^\ell)\le e(G_{n,p})$, and this is at most $(1+o(1))p{n\choose 2}$ a.a.s.\ provided $p\gg n^{-2}$ by the Chernoff bound.

	For each $F\in \c{T}^\ell$, let $F'\sub F$ be such that $m_2(F)=\f{e(F')-1}{v(F')-2}$, and let $X_F$ be the number of copies of $F'$ in $G_{n,p}$.  Note that $\E[X_F]=\Theta(p^{e(F')} n^{v(F')})$, and if $p\le  c n^{-1/m_2(\c{T}^\ell)}$ for some $c>0$ sufficiently small, then\footnote{Roughly speaking, this occurs provided $p^{e(F')} n^{v(F')}\le p n^2$, so it suffices to have $p\le n^{\f{v(F')-2}{e(F')-1}}=n^{-1/m_2(F)}$, and this follows from $p\le n^{-1/m_2(\c{T}^\ell)}$.} $\E[X_F]\le \frac{1}{2 |\c{T}^\ell|} \E[e(G_{n,p})]$ for all $F$.  Thus by considering the graph obtained by deleting an edge from each copy of $F'$ counted by each $X_F$, we find $\E[\ex(G_{n,p},\c{T}^\ell)]=\Omega(p n^2)$ for $p\le c n^{2-1/m_2(\c{T}^\ell)}$.  By monotonicity, this implies $\E[\ex(G_{n,p},\c{T}^\ell)]=\Om(n^{2-\rec{m_2(\c{T}^\ell)}})$ for $p\ge  n^{-1/m_2(\c{T}^\ell)}$.  That this bound holds a.a.s.\ for $p\ge  n^{-1/m_2(\c{T}^\ell)}$ follows easily from Azuma's inequality\footnote{This uses that the function $\ex(G_{n,p},\c{T}^\ell)$ changes by at most 1 whenever a single edge of $G_{n,p}$ is removed or added, and that $\E[\ex(G_{n,p},\c{T}^\ell)]\ge n^{2-1/m_2(\c{T}^\ell)}\gg \sqrt{n\choose 2}$.  This last inequality follows from $m_2(F)>1$ for all $F\in \c{T}^\ell$, which follows from every such $F$ containing a cycle.}, proving the lower bound for this range.
	
	For $p\ll n^{-1/m_2(\c{T}^\ell)}$ we use a somewhat more careful argument based off of \cite{MY}.  Define $F'$ and $X_F$ as above.   Let $\om_F=\f{\E[e(G_{n,p})]}{\E[X_F]}$, and by similar reasoning as above we have $\om_F\to \infty$.  Choose $\ep=\ep(n)$ any function such that $\ep\to 0$ and $\ep \om_F\to \infty$ for all $F$.  By Markov's inequality,
	\[\Pr\left[X_F\ge \ep p {n\choose 2}\right]\le \rec{\ep \om_F}\to 0.\]
	Thus a.a.s.\ $X_F=o(p{n\choose 2})=o(\E[e(G_{n,p})])$.  By the Chernoff bound, $e(G_{n,p})\ge (1-o(1))p{n\choose 2}$ a.a.s.\ for $p\gg n^{-2}$.  Thus by deleting an edge from each copy counted by some $X_F$, we obtain a $\c{T}^\ell$-free subgraph with $(1-o(1))p{n\choose 2}$ edges a.a.s.
\end{proof}

We say a rooted tree $(T,R)$ is \textit{balanced} if $\rho(T)=\f{e(T)}{v(T)-|R|}$.  It is easy to show  $\ex(n,\c{T}^\ell)=O(n^{2-1/\rho(T)})$ whenever $(T,R)$ is balanced (see \cite{bukh2018rational}), and in particular the lower bound of Corollary~\ref{cor:deletion} is tight at $p=1$ for balanced trees.  It is natural to ask if this lower bound is tight for all $p$, possibly up to some extra logarithmic factors.

\begin{quest}\label{quest:set}
	Which balanced trees $T$ are such that for all $\ell$, there exists a constant $c=c(\ell)$ such that for all $p$ we have a.a.s.
	\[\ex(G_{n,p},\c{T}^\ell)=O_\ell\left(\max\left\{p^{1-\rec{\rho(T)}}n^{2-\rec{\rho(T)}},n^{2-\rec{m_2(\c{T}^\ell)}}(\log n)^c\right\}\right).\]
\end{quest}

In fact, this upper bound might hold even if one only forbids a single element of $\c{T}^\ell$.  To this end, given a rooted graph $(F,R)$, we let $F^\ell$ denote the element of $\c{F}^\ell$ with the maximum number of vertices, i.e.\ the graph consisting of $\ell$ copies of $F$ which agree only at vertices of $R$.
\begin{quest}\label{quest:single}
	Which balanced trees $T$ are such that for all $\ell$, there exists a constant $c=c(\ell)$ such that for all $p$ we have a.a.s.
	\[\ex(G_{n,p},T^\ell)=O_\ell\left(\max\left\{p^{1-\rec{\rho(T)}}n^{2-\rec{\rho(T)}},n^{2-\rec{m_2(T^\ell)}}(\log n)^c\right\}\right).\]
\end{quest}
Question~\ref{quest:single} can be viewed as a significant generalization of a conjecture of Bukh and Conlon~\cite{bukh2018rational}, which asserts that $\ex(n,T^\ell)=\Theta(n^{2-1/\rho(T)})$ if $T$ is balanced and $\ell$ is sufficiently large.  This conjecture of Bukh and Conlon is unsolved despite receiving a massive amount of attention, and as such it seems hopeless to answer Question~\ref{quest:single} in general.  However, it is plausible that Question~\ref{quest:single} could be solved for trees $T$ which are known to satisfy Bukh and Conlon's conjecture.  There are many such $T$, see for example \cite{conlon2022rational,conlon2021more,janzer2020extremal,jiang2020negligible,jiang2023many,kang2021rational}.

The only positive answers to Questions~\ref{quest:set} and \ref{quest:single} that we are aware of come from work of Morris and Saxton~\cite{morris2016number} who gave a positive answer to Question~\ref{quest:single} when $T$ is a star and $R$ is its set of leaves.  They also gave a positive answer to Question~\ref{quest:single} when $T$ is a path and $R$ is its set of leaves, but they did this only for $\ell=2$.  In forthcoming work with McKinley, we extend this result of Morris and Saxton to give a positive answer to Question~\ref{quest:single} for all $\ell$ when $T$ is a path.  

The proofs of the results mentioned above all rely on the method of hypergraph containers together with a form of ``balanced supersaturation'' (which roughly say that if $G$ has $e(G)\gg n^{2-1/\rho(T)}$, then $G$ has about as many copies of $T^\ell$ as one would expect in a random graph, and that these copies are ``spread out'' in $G$).  It is possible that some of the current proofs showing $\ex(n,T^\ell)=O(n^{2-1/\rho(T)})$ for various graphs $T$ can be extended to give supersaturation, but we suspect that balanced supersaturation will be difficult. Balanced supersaturation might be easier to prove for the family $\c{T}^\ell$ since here supersaturation is very easy to prove, but we are unaware of how to show that these copies are sufficiently spread out.

In addition to the difficulties mentioned above, extra complications may arise whenever we do not have $m_2(T^\ell)=\f{e(T^\ell)-1}{v(T^\ell)-2}$.  This can happen even for relatively simple $T$.  For example, let $T$ be the tree obtained by subdividing an edge of $K_{1,3}$, and let $R$ be its set of leaves. Note that $(T,R)$ is a balanced tree.  Because $T^\ell$ contains $K_{2,\ell}$ as a subgraph, we have for $\ell\ge 2$ that \[m_2(T^\ell)\ge \f{e(K_{2,\ell})-1}{v(K_{2,\ell})-2}=\f{2\ell-1}{\ell}> \f{4\ell-1}{2\ell+1}=\f{e(T^\ell)-1}{v(T^\ell)-2}.\]
Perhaps the reason this phenomenon occurs is because $(T,R)$ is balanced but not ``strictly balanced'' in the sense that $\rho(T)=\f{e_S}{|S|}$ for some $S\subsetneq V(T)\sm R$, and it is possible that the problem is easiest for trees which do not exhibit this behavior.

\textbf{Generalized Tur\'an problems in random graphs.} Because Question~\ref{quest:single} is unlikely to be solved in general, it might be more tractable to look at generalized Tur\'an problems in random graphs for trees $T$ where positive answers to Questions~\ref{quest:set} or \ref{quest:single} are known.  For example, we ask the following.

\begin{quest}
	What is the behavior of $\ex(G_{n,p},K_{a,b},K_{s,t})$ when $t$ is sufficiently large in terms of $a,b,s$?
\end{quest}
Note that Theorem~\ref{thm:mainGeneralized}, together with the fact that $\rho(K_{1,s})=s$ when $R$ is the set of leaves, gives $\ex(G_{n,p},K_{a,b},K_{s,t})=\Om(p^{ab-ab/s}n^{a+b-ab/s})$ a.a.s., though a better bound for small $p$ can be obtained by a deletion argument.  When $p=1$ this bound was shown to be tight by Ma, Yuan, and Zhang~\cite{ma2018some} for all $a,b,s$.  Morris and Saxton~\cite{morris2016number} gave a (relatively) easy argument showing that when $a=b=1$, the lower bound from Theorem~\ref{thm:mainGeneralized} together with a deletion argument establishes the correct bounds for $\ex(G_{n,p},K_{a,b},K_{s,t})$.  It is plausible that an adaptation of these approaches could be used to give tight results for all $a,b,s,p$. 

\textbf{Hypergraphs.}  For ease of presentation we only stated our results in terms of graphs, but everything we did easily extends to $r$-uniform hypergraphs.  We briefly discuss how this works.

The notions of rooted hypergraphs, densities, local isomorphisms, and so forth, are defined exactly analogous to how these are defined in the setting of graphs.  Here the hypothesis of Proposition~\ref{prop:relTuran} must change to have $m\gg \Del_i^{1/(r-i)}$ for all $i$, where $\Del_i$ is the maximum $i$-degree of the $r$-graph $G$, and the current proof can easily be adapted to this new setting using an approach similar to that of \cite[Proposition 2.1]{SV-Cycles}.  The statement and proof of Proposition~\ref{prop:randPolyGen} is essentially word for word the same as it is currently written.  

The only minor complication arises with Proposition~\ref{prop:treeDensity}, and in particular what it means for a hypergraph to be a tree.  For our current proof to go through, we define an $r$-uniform forest to be an $r$-graph such that one can order its edges $e_1,\ldots,e_m$ in such a way that $|e_i\cap \bigcup_{j<i} e_j|\le 1$ for all $i$.  Given this, one can prove an analog of Lemma~\ref{lem:tech} by letting $u_1,\ldots,u_{r-1}\in e_m$ be the vertices of degree 1 of some minimal counterexample $T$, then arguing that $\phi:V(T-\{u_1,\ldots,u_{r-1}\})\to V(F)$ is still a local isomorphism, and then arguing more or less as the proof is currently written.

In total this approach will show  $\ex(G_{n,p}^r,H,\c{T}^\ell)=\Om(n^{v-\f{a}{b} e})$ when $T$ is an $r$-uniform tree and $\ell$ is sufficiently large.  Unfortunately this bound is very weak in general.  Indeed, if $T$ is a loose path with three edges and $R$ is a set of two leaves, then this bound gives $\ex(n,\c{T}^\ell)=\Om(n^{r-\f{3r-4}{3}})=\Om(n^{4/3})$.  However, it is easy to show $\ex(n,\c{T}^\ell)\ge \ex(n,T)=\Om(n^{r-1})$ by simply taking every edge containing a given vertex.  To give a lower bound which is actually effective, one should instead prove
\[\ex(G_{n,p}^r,H,\c{L}(\c{T}^\ell))=\Om(n^{v-\f{a}{b} e}),\]
where now we forbid all of $\c{L}(\c{T}^\ell)$ instead of just $\c{T}^\ell$.
Such a bound can easily be shown using the current argument after noting that the $G'$ obtained from Proposition~\ref{prop:relTuran} is in fact $\c{L}(\c{F})$-free (this follows from $\c{L}(\c{L}(\c{F}))=\c{L}(\c{F})$, which follows from the fact that the composition of local isomorphisms is again a local isomorphism).  If $T$ is a loose path on three edges, then $\c{L}(\c{T}^\ell)$ contains the set of Berge theta graphs with paths of length 3, and from this one can conclude $\ex(n,\c{L}(\c{T}^\ell))=O(n^{4/3})$, matching the lower bound given by this approach.  In particular, this approach recovers the lower bounds for the Tur\'an number of berge theta graphs obtained by He and Tait~\cite{he2019hypergraphs}.

\section*{Acknowledgments}
We thank the two referees for their valuable comments.

\bibliographystyle{abbrv}
\bibliography{Polynomial}
\end{document}